\documentclass{amsart}
\usepackage{amssymb}
\usepackage{graphicx}

\theoremstyle{plain}
\newtheorem{theorem}{Theorem}[section]
\newtheorem{corollary}[theorem]{Corollary}

\newtheorem{proposition}[theorem]{Proposition}
\newtheorem{lemma}[theorem]{Lemma}
\numberwithin{equation}{section}

\begin{document}

\title[Chebyshev-like polynomial sequences]{Properties of two Chebyshev-like polynomial sequences}

\author{Karl Dilcher}
\address{Department of Mathematics and Statistics\\
         Dalhousie University\\
         Halifax, Nova Scotia, B3H 4R2, Canada}
\email{dilcher@mathstat.dal.ca}
\author{Seon-Hong Kim}
\address{Department of Mathematics and Research Institute of Natural Science\\
         Sookmyung Women’s University\\
         Seoul, 140-742 Korea}
\email{\tt shkim17@sookmyung.ac.kr}
\author{Kenneth B. Stolarsky}
\address{Department of Mathematics\\
         University of Illinois\\
         1409 West Green Street, Urbana, IL 61801, USA}
\email{\tt stolarsk@math.uiuc.edu}
\subjclass[2010]{Primary: 30C10; Secondary: 11R09, 33C45}
\keywords{Chebyshev polynomials, resultant, discriminant, irreducibility}
\thanks{Research supported in part by the Natural Sciences and Engineering
        Research Council of Canada, Grant \# 145628481}

\date{}

\setcounter{equation}{0}

\begin{abstract}
By modifying the generating function of the Chebyshev polynomials of the 
second kind, we obtain a sequence of reciprocal (or palindromic) polynomials,
as well as a related companion sequence. Among numerous other properties, we 
obtain discriminant and resultant identities for these polynomial sequences and
prove partial irreducibility results. Throughout, we point to parallels and
connections with the Chebyshev polynomials of both kinds.
\end{abstract}

\maketitle

\section{Introduction}\label{sec:1}

Pascal's triangle and Chebyshev polynomials of both kinds belong to the most 
important objects in the theory of special functions and related areas of
mathematics. We begin with the Chebyshev polynomials of the second kind,
$U_n(x)$, which among other equivalent definitions can be defined by the
generating function
\begin{equation}\label{1.1}
\frac{1}{1-2xt+t^2} = \sum_{n=0}^\infty U_n(x)t^n.
\end{equation}
The entries of Pascal's triangle as well, when seen as coefficients of the 
polynomial
$(x+1)^n$, can be given by a generating function through an easy geometric
series. For reasons that will soon become clear, we are only interested in the
even-index rows for Pascal's triangle, and we consider
\begin{equation}\label{1.2}
F(z,t):=\frac{1}{1-(z+1)^2t} = \sum_{n=0}^\infty(z+1)^{2n}t^n.
\end{equation}
If we now form the product $F(z,t)F(-z,t)$, then the Cauchy product of the
right-hand side of \eqref{1.2} and its ``$-z$" analogue gives
\begin{equation}\label{1.3}
F(z,t)F(-z,t)=\sum_{n=0}^\infty\left((z-1)^{2n}
\sum_{k=0}^n\left(\frac{z+1}{z-1}\right)^{2k}\right)t^n.
\end{equation}
On the other hand, the product of the generating functions gives
\begin{align}
F(z,t)F(-z,t)
&=\frac{1}{1-2\tfrac{z^2+1}{z^2-1}\left((z^2-1)t\right)+\left((z^2-1)t\right)^2}\label{1.4} \\
&=\sum_{n=0}^\infty(z^2-1)^nU_n(\tfrac{z^2+1}{z^2-1})t^n,\nonumber
\end{align}
where we have used \eqref{1.1}. Equating coefficients of $t^n$ in \eqref{1.3}
and \eqref{1.4}, we obtain after some straightforward manipulations,
\begin{align*}
U_n(\tfrac{z^2+1}{z^2-1}) &=\left(\frac{z-1}{z+1}\right)^n
\sum_{k=0}^n\left(\frac{z+1}{z-1}\right)^{2k} \\
&= \frac{z^2-1}{4z}\left(\left(\frac{z+1}{z-1}\right)^{n+1}
-\left(\frac{z-1}{z+1}\right)^{n+1}\right).
\end{align*}
This last identity could also be obtained from the well-known formula
\begin{equation}\label{1.6}
U_n(x)=\frac{\left(x+\sqrt{x^2-1}\right)^{n+1}
-\left(x-\sqrt{x^2-1}\right)^{n+1}}{2\sqrt{x^2-1}};
\end{equation}
see, e.g., \cite[p.~10]{Ri}.

We now go one step further and consider the generating function
\begin{equation}\label{1.7}
F(z,t)^2F(-z,t)=\sum_{n=0}^\infty p_n(z)t^n.
\end{equation}
As is the case with the sequences generated by \eqref{1.2} and \eqref{1.4},
the functions $p_n(z)$, $n=0, 1, \ldots,$ are polynomials; see 
Section~\ref{sec:2}. The first few of them are listed in Table~1.

\medskip
\begin{center}
{\renewcommand{\arraystretch}{1.1}
\begin{tabular}{|r|l|}
\hline
$n$ & $p_n(z)$ \\
\hline
0 & 1 \\
1 & $3z^2+2z+3$ \\
2 & $6z^4+8z^3 + 20z^2 + 8z + 6$ \\
3 & $10z^6 + 20z^5 + 70z^4 + 56z^3 + 70z^2 + 20z + 10$  \\
4 & $15z^8 + 40z^7 + 180z^6 + 216z^5 + 378z^4 + 216z^3 + 180z^2 + 40z + 15$  \\
5 & $21 z^{10}+70 z^9+385 z^8+616 z^7+1386 z^6+1188 z^5+1386 z^4+\cdots$  \\
\hline
\end{tabular}}

\medskip
{\bf Table~1}: $p_n(z)$ for $0\leq n\leq 5$.
\end{center}

\medskip
These polynomials will be the main objects of study in this paper.
In Section~\ref{sec:2} we derive a number of basic properties of the polynomials
$p_n(z)$, some of which will be needed in later sections. In Section~\ref{sec:3}
we establish connections between these polynomials and the Chebyshev
polynomials of both kinds. A companion sequence to the polynomials $p_n(z)$ is
introduced and investigated in Section~\ref{sec:4}, and in Section~\ref{sec:5}
we evaluate certain resultants and discriminants. Section~\ref{sec:6} is 
devoted to a partial irreducibility result for the polynomials $p_n(x)$.
We conclude this paper with some factorization results of certain related
sequences.

\section{Basic properties of the $p_n(z)$}\label{sec:2}

In this section we derive some basic properties of the polynomials $p_n(x)$
defined by \eqref{1.7}. We begin by taking the Cauchy product of $F(z,t)$
with itself, obtaining
\begin{equation}\label{2.1}
F(z,t)^2=\sum_{n=0}^\infty\left(\sum_{k=0}^n(z+1)^{2k}(z+1)^{2n-2k}\right)t^n
=\sum_{n=0}^\infty(n+1)(z+1)^{2n}t^n.
\end{equation}
The series on the right-hand side of \eqref{2.1} can also be obtained by 
multiplying both sides of \eqref{1.2} by $t$ and then differentiating with
respect to $t$.

Next, taking the Cauchy product of the ``$-z$" analogue of 
\eqref{1.2} with \eqref{2.1}, we get
\begin{equation}\label{2.2}
F(z,t)^2F(-z,t)=\sum_{n=0}^\infty
\left(\sum_{k=0}^n(k+1)(z+1)^{2k}(z-1)^{2n-2k}\right)t^n,
\end{equation}
and equating coefficients of $t^n$ in \eqref{2.2} and \eqref{1.7} gives the
explicit expression
\begin{equation}\label{2.3}
p_n(z) = \sum_{k=0}^n(k+1)(z+1)^{2k}(z-1)^{2n-2k},
\end{equation}
or equivalently,
\begin{equation}\label{2.4}
p_n(z) = (z-1)^{2n}\sum_{k=0}^n(k+1)\left(\frac{z+1}{z-1}\right)^{2k}.
\end{equation}
This shows that $p_n(z)$ is a polynomial of degree $2n$ with integer 
coefficients, and furthermore we see that
\begin{equation}\label{2.5}
z^{2n}p_n(\tfrac{1}{z}) = p_n(z), 
\end{equation}
which means that $p_n(z)$ is a {\it reciprocal} (or self-inversive or 
palindromic) polynomial.
Next, if we substitute $z=1$, respectively $z=-1$, in \eqref{2.3}, we 
immediately get the evaluations
\begin{equation}\label{2.6}
p_n(1) = (n+1)2^{2n},\qquad p_n(-1) = 2^{2n}.
\end{equation}
This means, in particular, that the sum of the coefficients of $p_n(z)$ is
$(n+1)2^{2n}$. 

The identity \eqref{2.3} is also the basis for deriving several different
recurrence relations:

\begin{proposition}\label{prop:2.1}
For $n\geq 2$ we have
\begin{align}
p_{n+1}(z) &= (z-1)^2p_n(z) + (n+2)(z+1)^{2n+2},\label{2.7}\\
(n+1)p_{n+1}(z) &= \left((2n+3)z^2+2z+2n+3\right)p_n(z) \label{2.8}\\
&\qquad -(n+2)(z^2-1)^2p_{n-1}(z),\nonumber\\
p_{n+1}(z) &= (3z^2+2z+3)p_n(z)-(3z^2-2z+3)(z+1)^2p_{n-1}(z)\label{2.9}\\
&\qquad+(z-1)^2(z+1)^4p_{n-2}(z),\nonumber
\end{align}
with initial conditions given in Table~1. The recurrences \eqref{2.7} and 
\eqref{2.8} are also valid for $n=0$ and $n=1$, respectively.
\end{proposition}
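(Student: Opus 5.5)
Write $a=(z+1)^2$ and $b=(z-1)^2$, so that \eqref{2.3} becomes $p_n=\sum_{k=0}^n(k+1)a^kb^{n-k}$. The first recurrence \eqref{2.7} follows directly from this form. Multiplying by $b$ gives
\[
bp_n=\sum_{k=0}^n(k+1)a^kb^{n+1-k},
\]
which is the sum for $p_{n+1}$ with the top term $k=n+1$, namely $(n+2)a^{n+1}=(n+2)(z+1)^{2n+2}$, removed. This argument uses only \eqref{2.3}, so it holds for all $n\ge 0$.

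For the other two recurrences I would use the generating function rather than manipulate sums. By \eqref{1.2} and \eqref{1.7},
\[
\sum_n p_nt^n=\frac{1}{(1-at)^2(1-bt)}.
\]
The denominator is
\[
(1-at)^2(1-bt)=1-(2a+b)t+(a^2+2ab)t^2-a^2bt^3.
\]
Multiplying through and comparing coefficients of $t^{n+1}$ for $n\ge2$ gives a three-term recurrence with coefficients $2a+b$, $-(a^2+2ab)$ and $a^2b$. It remains to check these against \eqref{2.9}:
\begin{itemize}
\item $2a+b=3z^2+2z+3$;
\item $a^2+2ab=a(a+2b)=(z+1)^2(3z^2-2z+3)$;
\item $a^2b=(z-1)^2(z+1)^4$.
\end{itemize}
This is \eqref{2.9}. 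It needs $n\ge 2$ so that $p_{n-2}$ is defined.

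For \eqref{2.8} I would use the differential equation satisfied by $G(t)=\sum p_nt^n=(1-at)^{-2}(1-bt)^{-1}$. Logarithmic differentiation gives
\[
(1-at)(1-bt)\,G'=\bigl(2a(1-bt)+b(1-at)\bigr)G=\bigl((2a+b)-3abt\bigr)G.
\]
Comparing coefficients of $t^n$ yields
\[
(n+1)p_{n+1}-(a+b)np_n+ab(n-1)p_{n-1}=(2a+b)p_n-3ab\,p_{n-1},
\]
which rearranges to
\[
(n+1)p_{n+1}=\bigl((a+b)n+2a+b\bigr)p_n-(n+2)ab\,p_{n-1}.
\]
Here $ab=(z^2-1)^2$, and $(a+b)n+2a+b=(2n+3)z^2+2z+(2n+3)$, since $a+b=2z^2+2$ and $2a+b=3z^2+2z+3$. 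This is exactly \eqref{2.8}, and it holds whenever $p_{n-1}$ makes sense, i.e.\ for $n\ge1$.

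The only real obstacle is bookkeeping: confirming the coefficient identities and the index ranges in the statement. For \eqref{2.9} one can also check directly that the three-term formula is consistent with the Table~1 values for small $n$.
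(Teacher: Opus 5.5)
Your proof is correct. For \eqref{2.7} and \eqref{2.9} you argue as the paper does: you peel off the top term of \eqref{2.3}, and you multiply the generating function by its cubic denominator $(1-at)^2(1-bt)$. The difference is in \eqref{2.8}. The paper does not go back to the generating function there. It writes \eqref{2.7} at $n$ and at $n-1$, multiplies them by $n+1$ and by $(n+2)(z+1)^2$ respectively, and subtracts. The inhomogeneous terms $(n+1)(n+2)(z+1)^{2n+2}$ then cancel, and the coefficient $(n+1)b+(n+2)a$ of $p_n$ is exactly your $(a+b)n+2a+b$. You instead use the first-order differential equation $(1-at)(1-bt)G'=((2a+b)-3abt)G$ obtained by logarithmic differentiation. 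The paper's elimination is a two-line reuse of \eqref{2.7}, and it makes the range $n\ge 1$ immediate, since it only needs \eqref{2.7} at $n-1\ge 0$. Your route is self-contained from the generating function and is the systematic method. It also explains the structure of the two recurrences: the logarithmic derivative involves only the two distinct poles $1/a$ and $1/b$, which gives order two with coefficients linear in $n$. By contrast, \eqref{2.9} has constant coefficients and order three, one for each pole counted with multiplicity. Your method would also work unchanged for any $(1-at)^{-\alpha}(1-bt)^{-\beta}$, where no inhomogeneous first-order relation like \eqref{2.7} need be available.
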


The recurrence relation \eqref{2.7} is non-homogeneous, while \eqref{2.8} is
homogeneous of order 2. However, this last one has non-constant coefficients in
terms of $n$. It is also of ``Somos type", in the sense that it is not clear
from the recurrence itself that the polynomial would have integer coefficients.

\begin{proof}[Proof of Proposition~\ref{prop:2.1}]
We multiply both sides of \eqref{2.3} by $(z-1)^2$ and add $(n+2)(z+1)^{2n+2}$.
Using \eqref{2.3} again, we get $p_{n+1}(z)$, which proves \eqref{2.7}.

Next we consider \eqref{2.7} and its analogue with $n$ replaced by $n-1$.
If we multiply the former by $n+1$ and the latter by $(n+2)(z+1)^2$, then the
non-homogeneous term can be eliminated, which leads to \eqref{2.8}.

Finally, to obtain \eqref{2.9}, we use the definition of $F(z,t)$ in \eqref{1.2}
to write the generating function \eqref{1.7} explicitly as
\begin{equation}\label{2.9a}
\frac{1}{1-(3z^2+2z+3)t+(3z^2-2z+3)(z+1)^2t^2-(z-1)^2(z+1)^4t^3}
=\sum_{n=0}^\infty p_n(z)t^n.
\end{equation}
Following a standard method for obtaining recurrence relations, we multiply
both sides of \eqref{2.9a} by the denominator of the left-hand side and 
collect like powers of $t$. Writing the denominator in \eqref{2.9a} for 
simplicity as $1-a_1t+a_2t^2-a_3t^3$ and suppressing the arguments of the
polynomials $p_n(z)$, we then get
\begin{align*}
1 = p_0+\left(p_1-a_1p_0\right)t&+\left(p_2-a_1p_1+a_2p_0\right)t^2\\
&+\sum_{n=3}^\infty\left(p_n-a_1p_{n-1}+a_2p_{n-2}-a_3p_{n-3}\right)t^n.
\end{align*}
By the uniqueness of power series, the coefficients of $t^n$ for $n=0,1,2$ give
the first three entries in Table~1, while for $n\geq 3$ we immediately 
get \eqref{2.9}.
\end{proof}

A recurrence relation such as \eqref{2.9} or the corresponding generating
function can typically be used to obtain explicit formulas, similar in
nature to \eqref{1.6} for the Chebyshev polynomials. Here we get the 
following expression.

\begin{proposition}\label{prop:2.2}
For all $n\geq 0$, 
\begin{equation}\label{2.10}
p_n(z)=\frac{(z-1)^{2n+4}-\left(z^2-(4n+6)z+1\right)(z+1)^{2n+2}}{16z^2}.
\end{equation}
\end{proposition}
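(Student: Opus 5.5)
The plan is to evaluate the finite sum in \eqref{2.4} in closed form. The sum is arithmetico-geometric, so it has a standard closed form.

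Put $q=\left(\frac{z+1}{z-1}\right)^2$. Then \eqref{2.4} reads $p_n(z)=(z-1)^{2n}S_n(q)$ with $S_n(q)=\sum_{k=0}^n(k+1)q^k$. Differentiating the geometric sum $\sum_{k=0}^{n+1}q^k=(q^{n+2}-1)/(q-1)$ gives
\begin{equation*}
S_n(q)=\frac{(n+2)q^{n+1}(q-1)-(q^{n+2}-1)}{(q-1)^2}
=\frac{1-(n+2)q^{n+1}+(n+1)q^{n+2}}{(q-1)^2}.
\end{equation*}

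Next substitute $q$ back in. We have
\begin{equation*}
q-1=\frac{(z+1)^2-(z-1)^2}{(z-1)^2}=\frac{4z}{(z-1)^2},
\qquad\text{so}\qquad
(q-1)^2=\frac{16z^2}{(z-1)^4}.
\end{equation*}
Multiplying by $(z-1)^{2n}$ then gives
\begin{equation*}
p_n(z)=\frac{(z-1)^{2n+4}-(n+2)(z+1)^{2n+2}(z-1)^2+(n+1)(z+1)^{2n+4}}{16z^2}.
\end{equation*}
The last two terms share the factor $(z+1)^{2n+2}$, and what remains is
\begin{equation*}
(n+1)(z+1)^2-(n+2)(z-1)^2=-z^2+(4n+6)z-1=-\left(z^2-(4n+6)z+1\right).
\end{equation*}
This is exactly \eqref{2.10}.

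The only delicate points are bookkeeping. The closed form for $S_n$ assumes $q\neq 1$, i.e.\ $z\neq 0$, and $z\ne 1$. Both sides of \eqref{2.10} are rational functions in $z$, and they agree for all other $z$, so they agree as rational functions. The division by $16z^2$ is exact because the left side is a polynomial. As a sanity check, $n=0$ should give $p_0=1$: the numerator is $(z-1)^4-(z^2-6z+1)(z+1)^2$, which simplifies to $16z^2$.

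As an alternative, one could verify \eqref{2.10} by induction using \eqref{2.7}. Apply $(z-1)^2\cdot$ to the formula for $p_n$ and add $(n+2)(z+1)^{2n+2}\cdot 16z^2/16z^2$. This reduces to the identity $(z-1)^2(z^2-(4n+6)z+1)-16(n+2)z^2=(z^2-(4n+10)z+1)(z+1)^2$, which is checked by expanding; the $z^3$ and $z$ coefficients match with value $-(4n+8)$. The main obstacle in either route is simply keeping the algebra straight, and I would rely on the direct summation.
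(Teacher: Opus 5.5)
Your proof is correct, and it takes a different route from the paper. The paper works with the generating function. It writes $F(z,t)^2F(-z,t)$ in the partial fraction form \eqref{2.11}, with a simple pole at $t=(z-1)^{-2}$ and a double pole at $t=(z+1)^{-2}$; this decomposition was found by computer algebra and then verified. The paper then extracts the coefficient of $t^n$, and the factor $n+1$ contributed by the double pole is what produces the term $(4n+6)z$.

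You instead sum the arithmetico-geometric series in \eqref{2.4} directly, using the derivative of a geometric sum and $q-1=4z/(z-1)^2$. The closed form of $S_n(q)$, the resulting numerator, and the factorization $(n+1)(z+1)^2-(n+2)(z-1)^2=-(z^2-(4n+6)z+1)$ all check out. Your handling of the excluded values $z=0,1$ is sound, and the inductive alternative via \eqref{2.7} also works.

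The two proofs are really the same computation in different clothing. Since $\sum_{n\ge 0}S_n(q)t^n=1/((1-t)(1-qt)^2)$, your closed form of $S_n(q)$ is exactly the partial fraction extraction for this function, after the rescaling $t\mapsto(z-1)^2t$. Your version is fully by hand and self-contained given \eqref{2.4}. The paper's version stays within its generating-function framework and shows, through the pole structure, where each term of \eqref{2.10} comes from.
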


\begin{proof}
The generating function in \eqref{1.7} has the partial fraction expansion
\begin{align}
F(z,t)^2F(-z,t)&=\frac{(z-1)^4}{16z^2}\cdot\frac{1}{1-(z-1)^2t}
-\frac{(z^2-1)^2}{16z^2}\cdot\frac{1}{1-(z+1)^2t}\label{2.11} \\
&\qquad+\frac{(z+1)^2}{4z}\cdot\frac{1}{(1-(z+1)^2t)^2},\nonumber
\end{align}
which was obtained by computer algebra and is easy to verify. Upon equating
coefficients of $t^n$ in \eqref{1.7} and \eqref{2.11} we then get 
\eqref{2.10} after some easy manipulations.
\end{proof}

As a first application of \eqref{2.10} we obtain explicit expressions for the
coefficients of $p_n(z)$. In what follows, we use the notation
\begin{equation}\label{2.12}
p_n(z) = \sum_{k=0}^{2n}a_k^{(n)} z^k.
\end{equation}

\begin{proposition}\label{prop:2.3}
For all $n\geq 0$ we have
\begin{equation}\label{2.13}
a_{2j}^{(n)}=\frac{n+2}{4}\binom{2n+2}{2j+1}\qquad(0\leq j\leq n)
\end{equation}
and
\begin{equation}\label{2.14}
a_{2j+1}^{(n)}=\frac{(n+2)(n-j)}{2(2j+3)}\binom{2n+2}{2j+1}
\qquad(0\leq j\leq n-1).
\end{equation}
In particular, $a_0^{(n)}=a_{2n}^{(n)}=(n+1)(n+2)/2$.
\end{proposition}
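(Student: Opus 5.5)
We read off coefficients directly from the closed form \eqref{2.10} of Proposition~\ref{prop:2.2}. Multiply through by $16z^2$ and set
\[
N(z):=(z-1)^{2n+4}-\bigl(z^2-(4n+6)z+1\bigr)(z+1)^{2n+2}=16z^2p_n(z),
\]
so that $a_k^{(n)}$ is $\tfrac1{16}$ times the coefficient of $z^{k+2}$ in $N(z)$. Put $m=2n+2$. The coefficient of $z^{r}$ in $N$ is then
\[
c_r=(-1)^r\binom{m+2}{r}-\binom{m}{r-2}+(4n+6)\binom{m}{r-1}-\binom{m}{r},
\]
and we need $a_k^{(n)}=c_{k+2}/16$. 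As a consistency check, $c_0=c_1=0$ must hold (this is automatic since $N$ is divisible by $z^2$), and it is easily confirmed: $c_0=1-1=0$ and $c_1=-(m+2)+(4n+6)-m=0$.

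First we rewrite all binomials in terms of a single one. Expand $\binom{m+2}{r}=\binom{m}{r}+2\binom{m}{r-1}+\binom{m}{r-2}$.

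\emph{Even case} $r=2j+2$. The $\binom{m}{r}$ and $\binom{m}{r-2}$ terms cancel, leaving
\[
c_{2j+2}=(2+4n+6)\binom{m}{2j+1}=4(n+2)\binom{2n+2}{2j+1}.
\]
Dividing by $16$ gives exactly \eqref{2.13}. This part is immediate.

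\emph{Odd case} $r=2j+3$. Here
\[
c_r=-2\binom{m}{r}-2\binom{m}{r-2}+(4n+4)\binom{m}{r-1}.
\]
We express $\binom{m}{2j+3}$, $\binom{m}{2j+2}$ and $\binom{m}{2j+1}$ as rational multiples of $B=\binom{2n+2}{2j+1}$, using $\binom{m}{2j+2}=B\,\tfrac{2n+1-2j}{2j+2}$ and $\binom{m}{2j+3}=B\,\tfrac{(2n+1-2j)(2n-2j)}{(2j+2)(2j+3)}$. The bracket then becomes a rational function of $n,j$ times $B$. We combine over the common denominator $(j+1)(2j+3)$ and factor, expecting the numerator to reduce to $8(n+2)(n-j)(j+1)$, so that $c_{2j+3}=\tfrac{8(n+2)(n-j)}{2j+3}B$ and dividing by $16$ yields \eqref{2.14}.

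This algebraic simplification is the only real obstacle. To keep it manageable, we substitute $u=n-j$, which makes the factor $(n-j)$ visible: the term $2n-2j=2u$ appears in $\binom{m}{2j+3}$, while the remaining two terms must combine to something divisible by $u$. Before doing the general algebra, we check the case $n=1$, $j=0$: $a_1^{(1)}=\tfrac{3\cdot1}{2\cdot3}\cdot4=2$, which is correct.

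Finally, the special values are $a_0^{(n)}=\tfrac{n+2}{4}(2n+2)=\tfrac{(n+1)(n+2)}2$, and $a_{2n}^{(n)}$ agrees with this by the reciprocity \eqref{2.5}, or directly from \eqref{2.13} with $j=n$.
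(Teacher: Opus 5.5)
Your proposal is correct and follows the paper's route: read the coefficients off \eqref{2.10} and express everything through $\binom{2n+2}{2j+1}$. The only difference is bookkeeping. You expand $\binom{2n+4}{r}$ by Pascal's rule, whereas the paper first rewrites the numerator as in \eqref{2.15}, so its odd case has two binomial terms instead of your three. The odd-case simplification you left as an expectation does go through: with $u=n-j$ and $J=j+1$, the numerator $-2u(2u+1)-2J(2J+1)+2(u+J)(2u+1)(2J+1)$ equals $8uJ(u+J+1)=8(n-j)(j+1)(n+2)$, exactly as you predicted.
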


\begin{proof}
We rewrite \eqref{2.10} as
\begin{equation}\label{2.15}
p_n(z)=\frac{(z-1)^{2n+4}-(z+1)^{2n+4}+4(n+2)z(z+1)^{2n+2}}{16z^2}.
\end{equation}
Using binomial expansions, we see that the numerator of \eqref{2.15} is
\begin{equation}\label{2.16}
-2\sum_{j=-1}^n\binom{2n+4}{2j+3}z^{2j+3}
+4(n+2)\sum_{k=-1}^{2n+1}\binom{2n+2}{k+1}z^{k+2},
\end{equation}
where we have slightly shifted the summation indices. For even powers of $z$,
only the second sum is relevant, and with \eqref{2.15} and \eqref{2.12} we
immediately get \eqref{2.13}.

For odd powers of $z$, we first note that the coefficient of $z$ vanishes in
\eqref{2.16}, as it must. Furthermore, \eqref{2.15} and \eqref{2.16}, along
with \eqref{2.12}, give 
\begin{equation}\label{2.17}
a_{2j+1}^{(n)}
=\frac{1}{8}\left(-\binom{2n+4}{2j+3}+(2n+4)\binom{2n+2}{2j+2}\right).
\end{equation}
It is easy to verify that
\[
\binom{2n+4}{2j+3}=\frac{(n+2)(2n+3)}{(2j+3)(j+1)}\binom{2n+2}{2j+1}
\]
and
\[
(2n+4)\binom{2n+2}{2j+2}=\frac{(n+2)(2n-2j+1)}{j+1}\binom{2n+2}{2j+1}.
\]
This, together with \eqref{2.17}, gives \eqref{2.14} after another
straightforward manipulation.
\end{proof}

To supplement the various linear identities we have derived so far, we are
now going to prove the following quadratic identity, which can also be seen
as a recurrence relation.
 
\begin{proposition}\label{prop:2.4}
For all $n\geq 1$ we have
\begin{equation}\label{2.18}
p_n(z)^2-p_{n-1}(z)p_{n+1}(z) = (z+1)^{2n}p_n(-z).
\end{equation}
As a consequence, Tur\'an's inequality
\begin{equation}\label{2.19}
p_n(z)^2-p_{n-1}(z)p_{n+1}(z) > 0
\end{equation}
holds for all $z\in\mathbb{R}$, except for $z=-1$ where the expression on the
left vanishes.
\end{proposition}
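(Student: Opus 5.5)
Since $p_n(z)$ is given by the closed form \eqref{2.10}, a sum of exponential-polynomial terms, I would prove \eqref{2.18} by direct substitution. Set $a=(z-1)^2$ and $b=(z+1)^2$, and write \eqref{2.10} as
\[
16z^2p_n(z)=a^{n+2}-c_n\,b^{n+1},\qquad c_n:=z^2-(4n+6)z+1 ,
\]
where $c_n$ is linear in $n$: $c_{n\pm1}=c_n\mp4z$. Multiplying \eqref{2.18} by $(16z^2)^2$, the left-hand side becomes
\[
(a^{n+2}-c_nb^{n+1})^2-(a^{n+1}-c_{n-1}b^{n})(a^{n+3}-c_{n+1}b^{n+2}).
\]
The terms $a^{2n+4}$ cancel. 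The pure $b$-terms give $b^{2n+2}(c_n^2-c_{n-1}c_{n+1})=16z^2b^{2n+2}$, because $c_{n-1}c_{n+1}=c_n^2-16z^2$. The mixed terms give
\[
a^{n+1}b^{n}\bigl(-2c_nab+c_{n+1}b^2+c_{n-1}a^2\bigr).
\]
Substituting $c_{n\pm1}=c_n\pm(\mp4z)$ turns this bracket into $c_n(a-b)^2+4z(a^2-b^2)$. Here $a-b=-4z$ and $a+b=2(z^2+1)$, so the bracket equals
\[
16z^2c_n-32z^2(z^2+1)=-16z^2\bigl(z^2+(4n+6)z+1\bigr)=-16z^2c_n(-z),
\]
where $c_n(-z)$ means $c_n$ with $z$ replaced by $-z$. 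Altogether the left-hand side is
\[
16z^2\,b^{n}\bigl(b^{n+2}-c_n(-z)\,a^{n+1}\bigr).
\]
Replacing $z$ by $-z$ in \eqref{2.10} interchanges $a$ and $b$, so $16z^2p_n(-z)=b^{n+2}-c_n(-z)a^{n+1}$. The left-hand side is therefore $(16z^2)^2b^np_n(-z)$. Dividing by $(16z^2)^2$ gives \eqref{2.18} for $z\neq0$, and it holds at $z=0$ by continuity, since both sides are polynomials. The only delicate step is the bookkeeping of the mixed term; the linearity of $c_n$ in $n$ makes it collapse.

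For \eqref{2.19}, note that $(z+1)^{2n}\ge0$ and vanishes only at $z=-1$. It therefore suffices to show $p_n(w)>0$ for all real $w$, applied with $w=-z$. By \eqref{2.3}, $p_n(w)$ is a sum of the nonnegative terms $(k+1)(w+1)^{2k}(w-1)^{2n-2k}$. The $k=0$ term is positive unless $w=1$, and the $k=n$ term is positive unless $w=-1$, so at every real $w$ at least one term is strictly positive. Hence $p_n(-z)>0$, and the expression in \eqref{2.19} is positive for every real $z\neq-1$ and zero at $z=-1$.
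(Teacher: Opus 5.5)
Your proposal is correct and follows the paper's route. Like the paper, you substitute the closed form \eqref{2.10} into the left side of \eqref{2.18}, reduce it to $(z+1)^{2n}$ times \eqref{2.10} evaluated at $-z$, and then obtain Tur\'an's inequality from the strict positivity of $p_n$ on $\mathbb{R}$ via \eqref{2.3}. Your bookkeeping with $a$, $b$ and $c_n$ (using that $c_n$ is linear in $n$) spells out the manipulations the paper leaves to the reader. Your observation that the $k=0$ and $k=n$ terms of \eqref{2.3} cannot vanish simultaneously supplies the justification of positivity that the paper only asserts.
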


This should be compared with similar identities for the Chebyshev polynomials
$T_n(x)$ and $U_n(x)$, namely
\[
T_n(z)^2-T_{n-1}(z)T_{n+1}(z) = 1-z^2,\qquad
U_n(z)^2-U_{n-1}(z)U_{n+1}(z) = 1.
\]
In the case of the Fibonacci numbers $F_n$, the identity
$F_n^2-F_{n-1}F_{n+1}=(-1)^{n+1}$ is known as Cassini's identity.

\begin{proof}[Proof of Proposition~\ref{prop:2.4}]
This is best done by using Proposition~\ref{prop:2.2}: Substitute \eqref{2.10}
into the left-hand side of \eqref{2.18}. Straightforward manipulations then
show that there are numerous cancellations, and the remaining terms can be
reduced to the right-hand side of \eqref{2.10} again, with $z$ replaced by $-z$.

For the second statement we note that, by \eqref{2.3}, $p_n(z)$ is strictly
positive for all real $z$. Therefore, the right-hand side of \eqref{2.18},
and thus the left-hand side of \eqref{2.19}, are strictly positive for all
real $z\neq -1$.
\end{proof}

\section{Connections with Chebyshev polynomials}\label{sec:3}

Chebyshev polynomials were already mentioned in this paper, and in particular
the polynomials $U_n(x)$ appeared in the Introduction. In this section,
we establish more direct connections between the Chebyshev polynomials of
both types and the polynomials $p_n(z)$. We begin with an easy consequence of
some identities from the Introduction.

\begin{proposition}\label{prop:3.1}
For all $n\geq 0$ we have
\begin{equation}\label{3.1}
p_n(z) = (z+1)^{2n}\sum_{k=0}^n\left(\frac{z^2-1}{(z+1)^2}\right)^k
U_k(\tfrac{z^2+1}{z^2-1}).
\end{equation}
\end{proposition}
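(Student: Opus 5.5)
The plan is to work at the level of generating functions, using the factorization $F(z,t)^2F(-z,t)=F(z,t)\cdot\bigl(F(z,t)F(-z,t)\bigr)$. By \eqref{1.2} the first factor is $\sum_{m\ge0}(z+1)^{2m}t^m$. By \eqref{1.4} the second factor is $\sum_{k\ge0}(z^2-1)^kU_k(\tfrac{z^2+1}{z^2-1})t^k$; this comes directly from \eqref{1.1} with $x=\tfrac{z^2+1}{z^2-1}$ and $t$ replaced by $(z^2-1)t$.

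Taking the Cauchy product of these two series, the coefficient of $t^n$ is
\[
\sum_{k=0}^n (z+1)^{2(n-k)}(z^2-1)^kU_k(\tfrac{z^2+1}{z^2-1}).
\]
Comparing with \eqref{1.7}, this coefficient must equal $p_n(z)$. We then pull out $(z+1)^{2n}$, which turns each summand into $\bigl((z^2-1)/(z+1)^2\bigr)^kU_k(\tfrac{z^2+1}{z^2-1})$, and this is exactly \eqref{3.1}.

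The only points needing care are formal. These are identities of formal power series in $t$ with coefficients rational in $z$, valid for $z\neq\pm1$. At $z=\pm1$ the identity \eqref{3.1} is read as a polynomial identity, since the left side is a polynomial. The right side is also a polynomial: $(z^2-1)^kU_k(\tfrac{z^2+1}{z^2-1})$ is a polynomial in $z$, because $U_k$ has degree $k$. So the identity extends by continuity.

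A direct alternative would insert the closed form for $U_k(\tfrac{z^2+1}{z^2-1})$ from the Introduction and compare with \eqref{2.4}. Summing the resulting double sum over $j\le k$ reproduces the weight $k+1$. I expect no real obstacle; the step that deserves attention is checking the rescaling $t\mapsto(z^2-1)t$ in \eqref{1.4}.
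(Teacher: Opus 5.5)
Your proposal is correct and is essentially the paper's proof: the paper also takes the Cauchy product of the series \eqref{1.2} and \eqref{1.4} and equates coefficients of $t^n$ with \eqref{1.7}. Your extra checks (the rescaling $t\mapsto(z^2-1)t$, and reading the identity as a polynomial identity at $z=\pm1$) are valid; the paper leaves them implicit.
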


\begin{proof}
Taking the Cauchy product of the series \eqref{1.2} and \eqref{1.4}, we get
\[
F(z,t)^2F(-z,t)=\sum_{n=0}^\infty\left(\sum_{k=0}^n(z^2-1)^k
U_k(\tfrac{z^2+1}{z^2-1})(z+1)^{2n-2k}\right)t^n,
\]
and upon equating coefficients of $t^n$ with \eqref{1.7}, we obtain \eqref{3.1}.
\end{proof}

The following identity is of a somewhat different type.

\begin{proposition}\label{prop:3.2}
For all $n\geq 0$ we have
\begin{equation}\label{3.2}
p_{n+1}(z)=(z+1)^2p_n(z)+\frac{1}{2}\left(\sqrt{1-z^2}\right)^{2n+3}
U_{2n+3}(\tfrac{1}{\sqrt{1-z^2}}).
\end{equation}
\end{proposition}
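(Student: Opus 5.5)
Using \eqref{1.6} with $x=1/\sqrt{1-z^2}$, we have $\sqrt{x^2-1}=z/\sqrt{1-z^2}$, so
\[
x\pm\sqrt{x^2-1}=\frac{1\pm z}{\sqrt{1-z^2}}.
\]
Hence
\[
\left(\sqrt{1-z^2}\right)^{2n+3}U_{2n+3}\!\left(\tfrac{1}{\sqrt{1-z^2}}\right)
=\frac{\left(\sqrt{1-z^2}\right)^{2n+3}}{2z/\sqrt{1-z^2}}\cdot
\frac{(1+z)^{2n+4}-(1-z)^{2n+4}}{\left(\sqrt{1-z^2}\right)^{2n+4}}
=\frac{(z+1)^{2n+4}-(z-1)^{2n+4}}{2z}.
\]
All square roots cancel, so the right-hand side is a genuine polynomial identity and no branch issues arise. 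The claim \eqref{3.2} therefore reduces to
\[
p_{n+1}(z)-(z+1)^2p_n(z)=\frac{(z+1)^{2n+4}-(z-1)^{2n+4}}{4z}.
\]

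I would then verify this reduced identity in one of two ways. The quickest route uses \eqref{2.3}. The sum for $p_{n+1}$ minus $(z+1)^2$ times the sum for $p_n$ telescopes in the coefficients: after shifting the index by one, the terms $(k+1)$ and $k$ differ by $1$, giving
\[
\sum_{k=0}^{n+1}(z+1)^{2k}(z-1)^{2n+2-2k}.
\]
This is a geometric sum with ratio $(z+1)^2/(z-1)^2$, and it equals
\[
\frac{(z+1)^{2n+4}-(z-1)^{2n+4}}{(z+1)^2-(z-1)^2}=\frac{(z+1)^{2n+4}-(z-1)^{2n+4}}{4z},
\]
exactly as required. Alternatively, I would substitute \eqref{2.10} for both $p_{n+1}$ and $p_n$ and simplify the numerator over $16z^2$. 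This is also straightforward, since the $(z-1)$ terms combine as $(z-1)^{2n+6}-(z+1)^2(z-1)^{2n+4}=-4z(z-1)^{2n+4}$.

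The only delicate point is the first step: keeping track of the powers of $\sqrt{1-z^2}$ and of the denominator $2\sqrt{x^2-1}$ so that the prefactor $\tfrac12$ and the odd index $2n+3$ come out exactly right. Once the Chebyshev term is converted into $\bigl((z+1)^{2n+4}-(z-1)^{2n+4}\bigr)/(2z)$, the telescoping argument is routine.
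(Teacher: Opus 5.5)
Your proof is correct, and your main route for the polynomial part differs from the paper's. The Chebyshev step is the same in both: the paper substitutes $z=\sqrt{1-x^{-2}}$ into a rewritten form of \eqref{1.6}, which is your $x=1/\sqrt{1-z^2}$.

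The paper first rewrites the closed form \eqref{2.15} as \eqref{3.3}. The Chebyshev substitution then yields \eqref{3.6}, an expression for $4zp_n(z)$ itself in terms of $(n+2)(z+1)^{2n+2}$ and $U_{2n+3}$. Only after that does the paper use the recurrence \eqref{2.7} to replace $(n+2)(z+1)^{2n+2}$ by $p_{n+1}(z)-(z-1)^2p_n(z)$.

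You work directly from the defining sum \eqref{2.3}, where shifting and subtracting collapses $p_{n+1}-(z+1)^2p_n$ to a geometric sum. This has two advantages:
\begin{itemize}
\item You need neither the partial-fraction formula \eqref{2.10}, which the paper found by computer algebra, nor the recurrence \eqref{2.7}.
\item It makes transparent that the difference is $\sum_{k=0}^{n+1}(z+1)^{2k}(z-1)^{2n+2-2k}=(z^2-1)^{n+1}U_{n+1}(\tfrac{z^2+1}{z^2-1})$. This also follows from \eqref{3.1} together with the identity in the Introduction, so \eqref{3.2} amounts to equating two Chebyshev expressions.
\end{itemize}

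The paper's route has its own payoff. It produces \eqref{3.6} along the way, which the authors note is of independent interest, and its rewritten form \eqref{3.3} is reused in the proof of Lemma~\ref{lem:3.3}.
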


\begin{proof}
We rewrite \eqref{2.15} as
\begin{equation}\label{3.3}
8zp_n(z)=2(n+2)(z+1)^{2n+2}-\frac{(1+z)^{2n+4}-(1-z)^{2n+4}}{2z}
\end{equation}
and compare it with the slightly rewritten identity \eqref{1.6}, namely
\begin{equation}\label{3.4}
U_n(x)=x^n\frac{\left(1+\sqrt{1-x^{-2}}\right)^{n+1}
-\left(1-\sqrt{1-x^{-2}}\right)^{n+1}}{2\sqrt{1-x^{-2}}}.
\end{equation}
With $z=\sqrt{1-x^{-2}}$, the identities \eqref{3.3} and \eqref{3.4} then give
\begin{equation}\label{3.5}
8\sqrt{1-x^{-2}}p_n(\sqrt{1-x^{-2}})=2(n+2)(1+\sqrt{1-x^{-2}})^{2n+2}
-\frac{U_{2n+3}(x)}{x^{2n+3}}.
\end{equation}
Dividing both sides by 2, this can be rewritten in terms of $z$ as
\begin{equation}\label{3.6}
4zp_n(z)=(n+2)(z+1)^{2n+2}
-\frac{1}{2}\left(\sqrt{1-z^2}\right)^{2n+3}
U_{2n+3}(\tfrac{1}{\sqrt{1-z^2}}).
\end{equation}
Finally, using \eqref{2.7} to replace $(n+2)(z+1)^{2n+2}$, we get the desired
identity \eqref{3.2}
\end{proof}

Of course, \eqref{3.5} and \eqref{3.6} could also be considered as identities
in their own right, connecting $p_n(z)$ and the Chebyshev polynomial
$U_{2n+3}(x)$.

We obtain identities that are more symmetric by combining $p_n(z)$ and 
$p_n(-z)$ as follows. We begin with a lemma.

\begin{lemma}\label{lem:3.3}
For all $n\geq 0$ we have
\begin{align}
p_n(z)+p_n(-z)&=(n+2)\frac{(1+z)^{2n+2}-(1-z)^{2n+2}}{4z},\label{3.7}\\
p_n(z)-p_n(-z)&=\frac{n}{4z}\left((1+z)^{2n+2}+(1-z)^{2n+2}\right)\label{3.8}\\
&\qquad -\frac{(1-z^2)^2}{8z^2}\left((1+z)^{2n}-(1-z)^{2n}\right). \nonumber
\end{align}
\end{lemma}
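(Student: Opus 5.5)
The plan is to start from the closed form \eqref{2.15} of Proposition~\ref{prop:2.2}, which is valid for all $n\geq 0$:
\[
16z^2p_n(z)=(z-1)^{2n+4}-(z+1)^{2n+4}+4(n+2)z(z+1)^{2n+2}.
\]
Replacing $z$ by $-z$ leaves $16z^2$ unchanged and gives
\[
16z^2p_n(-z)=(z+1)^{2n+4}-(z-1)^{2n+4}-4(n+2)z(z-1)^{2n+2},
\]
where we use that all exponents are even, so that $(-z\pm1)^{2m}=(z\mp1)^{2m}$. Everything will be written in terms of $1+z$ and $1-z$, since even powers of $z-1$ and $1-z$ agree.

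For \eqref{3.7}, add the two displays. The pure $(2n+4)$-th powers cancel exactly, leaving
\[
16z^2\bigl(p_n(z)+p_n(-z)\bigr)=4(n+2)z\bigl((1+z)^{2n+2}-(1-z)^{2n+2}\bigr).
\]
Dividing by $16z^2$ gives \eqref{3.7} directly. This step is immediate.

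For \eqref{3.8}, subtract the two displays. This gives
\[
16z^2\bigl(p_n(z)-p_n(-z)\bigr)=-2\bigl((1+z)^{2n+4}-(1-z)^{2n+4}\bigr)+4(n+2)z\bigl((1+z)^{2n+2}+(1-z)^{2n+2}\bigr).
\]
We now need to reshape the right-hand side into the target form. The target has $n$ rather than $n+2$ in front of the sum term, and a term involving $(1-z^2)^2\bigl((1+z)^{2n}-(1-z)^{2n}\bigr)$. We therefore split $4(n+2)z=4nz+8z$ and rewrite the leftover part using $(1\pm z)^{2n+4}=(1\pm z)^{2n}(1\pm z)^4$ and $(1\pm z)^{2n+2}=(1\pm z)^{2n}(1\pm z)^2$. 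The claim then reduces to the identity
\[
-2\bigl(A^2a-B^2b\bigr)+8z(Aa+Bb)=-2(1-z^2)^2(a-b),
\]
with $A=(1+z)^2$, $B=(1-z)^2$, $a=(1+z)^{2n}$ and $b=(1-z)^{2n}$.

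To verify this, compare coefficients of $a$ and of $b$. The coefficient of $a$ is $-2A^2+8zA=-2A(A-4z)=-2(1+z)^2(1-z)^2$, using $A-4z=(1-z)^2$. Symmetrically, the coefficient of $b$ is $2B^2+8zB=2B(B+4z)=2(1-z)^2(1+z)^2$. Both match the right-hand side. Dividing by $16z^2$ then yields \eqref{3.8}, with the factor $4nz/(16z^2)=n/(4z)$ and $2/(16z^2)=1/(8z^2)$.

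The only real obstacle is this regrouping in the difference formula: seeing that one should peel off $8z$ from $4(n+2)z$ and recognise $A-4z=B$. Everything else is direct substitution into \eqref{2.15}.
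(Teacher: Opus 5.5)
Your proof is correct and takes essentially the same route as the paper. Both start from the closed form \eqref{2.15} (the paper uses its rescaled version \eqref{3.3}), substitute $-z$, and add or subtract, and your identity $-2(A^2a-B^2b)+8z(Aa+Bb)=-2(1-z^2)^2(a-b)$ is just a rearrangement of the paper's auxiliary identity \eqref{3.10}, which you verify neatly via $A-4z=B$ and $B+4z=A$.
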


\begin{proof}
In \eqref{3.3}, we replace $z$ by $-z$, obtaining
\begin{equation}\label{3.9}
-8zp_n(-z)=2(n+2)(1-z)^{2n+2}-\frac{(1+z)^{2n+4}-(1-z)^{2n+4}}{2z}.
\end{equation}
Subtracting \eqref{3.9} from \eqref{3.3} and dividing by $8z$, we immediately
get \eqref{3.7}.

To obtain \eqref{3.8}, we first rewrite
\[
(1\pm z)^{2n+4}=\left(1\pm 4z+6z^2\pm 4z^3+z^4\right)(1\pm z)^{2n},
\]
which leads to 
\begin{align}
(1+z)^{2n+4}-(1-z)^{2n+4}
&=\left(1-z^2\right)^2\left((1+z)^{2n}-(1-z)^{2n}\right)\label{3.10}\\
&\qquad +4z\left((1+z)^{2n+2}+(1-z)^{2n+2}\right). \nonumber
\end{align}
Finally, adding \eqref{3.9} and \eqref{3.3} and then using \eqref{3.10}, we
obtain \eqref{3.8} after some easy manipulations.
\end{proof}

To apply Lemma~\ref{lem:3.3}, we need the explicit formula \eqref{1.6} for the
Chebyshev polynomials $U_n(x)$ and its analogue
\begin{equation}\label{3.11}
T_n(x)=\frac{1}{2}\left(\left(x+\sqrt{x^2-1}\right)^n
+\left(x-\sqrt{x^2-1}\right)^n\right).
\end{equation}
The $T_n(x)$ are the Chebyshev polynomials of the first kind, which can be 
defined by the generating function
\begin{equation}\label{3.12}
\frac{1-xt}{1-2xt+t^2} = \sum_{n=0}^\infty T_n(x)t^n;
\end{equation}
see, e.g., \cite[p.~41]{Ri} or \cite[(18.12.8)]{DLMF}.

\begin{proposition}\label{prop:3.4}
For all $n\geq 0$ we have
\begin{align}
p_n(\tfrac{\sqrt{x^2-1}}{x})+p_n(-\tfrac{\sqrt{x^2-1}}{x})
&=\frac{(n+2)U_{2n+1}(x)}{2x^{2n+1}},\label{3.13}\\
p_n(\tfrac{\sqrt{x^2-1}}{x})-p_n(-\tfrac{\sqrt{x^2-1}}{x})
&=\frac{2nxT_{2n+2}(x)-U_{2n-1}(x)}{4x^{2n+2}\sqrt{x^2-1}}. \label{3.14}
\end{align}
\end{proposition}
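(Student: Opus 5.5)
The plan is to substitute $z=\sqrt{x^2-1}/x$ directly into Lemma~\ref{lem:3.3} and recognize the resulting binomial-type expressions as the explicit formulas \eqref{1.6} and \eqref{3.11}. Write $\alpha=x+\sqrt{x^2-1}$ and $\beta=x-\sqrt{x^2-1}$, so that $\alpha\beta=1$. With this choice of $z$ we have
\[
1\pm z=\frac{x\pm\sqrt{x^2-1}}{x},\qquad\text{so}\qquad (1+z)^m=\frac{\alpha^m}{x^m},\quad (1-z)^m=\frac{\beta^m}{x^m}.
\]
Moreover $1-z^2=1/x^2$ and $z^2=(x^2-1)/x^2$. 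By \eqref{1.6} and \eqref{3.11},
\[
\alpha^m-\beta^m=2\sqrt{x^2-1}\,U_{m-1}(x),\qquad \alpha^m+\beta^m=2T_m(x).
\]

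For \eqref{3.13}, I apply \eqref{3.7} with $m=2n+2$. This gives
\[
(n+2)\,\frac{x}{4\sqrt{x^2-1}}\cdot\frac{2\sqrt{x^2-1}\,U_{2n+1}(x)}{x^{2n+2}}=\frac{(n+2)\,U_{2n+1}(x)}{2x^{2n+1}},
\]
as claimed.

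For \eqref{3.14}, I treat the two terms of \eqref{3.8} separately.

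The first term uses the sum formula with $m=2n+2$:
\[
\frac{n x}{4\sqrt{x^2-1}}\cdot\frac{2T_{2n+2}(x)}{x^{2n+2}}=\frac{2nxT_{2n+2}(x)}{4x^{2n+2}\sqrt{x^2-1}}.
\]

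The second term has prefactor $(1-z^2)^2/(8z^2)=1/\bigl(8x^2(x^2-1)\bigr)$. It multiplies the difference with $m=2n$, which equals $2\sqrt{x^2-1}\,U_{2n-1}(x)/x^{2n}$. The product is
\[
\frac{U_{2n-1}(x)}{4x^{2n+2}\sqrt{x^2-1}}.
\]
Subtracting the second term from the first gives exactly \eqref{3.14}.

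The only point needing care is $n=0$. There the second term of \eqref{3.8} vanishes, because $(1+z)^0-(1-z)^0=0$. This is consistent with the convention $U_{-1}=0$, which also follows from \eqref{1.6}. The identities are identities of rational functions in $x$ and $\sqrt{x^2-1}$, so no issue of branch choice arises: the same branch is used throughout.

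There is no real obstacle; the main risk is bookkeeping of the powers of $x$ and the factor $\sqrt{x^2-1}$.
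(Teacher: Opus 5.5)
Your proposal is correct and follows the paper's proof: substitute $z=\sqrt{x^2-1}/x$ into \eqref{3.7} and \eqref{3.8} of Lemma~\ref{lem:3.3}, then identify the resulting sums and differences of powers using the explicit formulas \eqref{1.6} and \eqref{3.11}. The only difference is that the paper uses these formulas in their rescaled forms \eqref{3.4} and \eqref{3.15}, while you work directly with $x\pm\sqrt{x^2-1}$; your bookkeeping, including the case $n=0$ with $U_{-1}=0$, checks out.
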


\begin{proof}
This is similar to the proof of Proposition~\ref{prop:3.2}. We set
$z=\sqrt{1-x^{-2}}=\sqrt{x^2-1}/x$ in \eqref{3.7} and use \eqref{3.4}, which
immediately gives \eqref{3.13}. 

Similarly, we rewrite \eqref{3.11} as
\begin{equation}\label{3.15}
T_n(x)=\frac{x^n}{2}\left(\left(1+\sqrt{1-x^{-2}}\right)^n
+\left(1-\sqrt{1-x^{-2}}\right)^n\right).
\end{equation}
Setting $z=\sqrt{x^2-1}/x$ in \eqref{3.8} and using \eqref{3.4} and 
\eqref{3.15}, we get \eqref{3.14} after some simplifications.
\end{proof}

The identity \eqref{3.13} will be used later, in Section~\ref{sec:6}.
As special cases of \eqref{3.13} and \eqref{3.14}, we obtain identities 
involving Fibonacci numbers $F_n$ and Lucas numbers $L_n$. As is well-known,
the Fibonacci numbers are defined recursively by $F_0=0$, $F_1=1$, and 
$F_{n+1}=F_n+F_{n-1}$ $(n\geq 1)$, while the Lucas numbers satisfy the same
recurrence, but with initial values $L_0=2$ and $L_1=1$.

\begin{corollary}\label{cor:3.5}
For all $n\geq 0$ we have
\begin{align}
p_n(\sqrt{5})+p_n(-\sqrt{5})
&= (n+2)4^nF_{2n+2},\label{3.16} \\
p_n(\sqrt{5})-p_n(-\sqrt{5})
&=4^n\frac{nL_{2n+2}-2F_{2n}}{\sqrt{5}}. \label{3.17}
\end{align}
\end{corollary}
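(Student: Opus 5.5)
The plan is to specialize Lemma~\ref{lem:3.3} directly at $z=\sqrt{5}$ and then apply the Binet formulas for $F_m$ and $L_m$. This avoids Proposition~\ref{prop:3.4}, which would require the complex argument $x=i/2$ (from $\sqrt{x^2-1}/x=\sqrt5$, i.e.\ $x^2=-1/4$) and the evaluations $U_m(i/2)=i^mF_{m+1}$ and $T_m(i/2)=i^mL_m/2$. That route should also work, but it needs more bookkeeping with powers of $i$.

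The key observation is that $1\pm\sqrt5=2\phi$ and $2\psi$ respectively, where $\phi=(1+\sqrt5)/2$ and $\psi=(1-\sqrt5)/2$. Hence
\[
(1+\sqrt5)^m-(1-\sqrt5)^m=2^m\sqrt5\,F_m,\qquad (1+\sqrt5)^m+(1-\sqrt5)^m=2^mL_m,
\]
by Binet's formulas $F_m=(\phi^m-\psi^m)/\sqrt5$ and $L_m=\phi^m+\psi^m$. Both are standard consequences of the recurrences and initial values stated just before the corollary.

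For \eqref{3.16}, I will put $z=\sqrt5$ in \eqref{3.7}. With $m=2n+2$ the numerator becomes $2^{2n+2}\sqrt5\,F_{2n+2}$ and the denominator is $4\sqrt5$. This leaves $(n+2)4^nF_{2n+2}$.

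For \eqref{3.17}, I will put $z=\sqrt5$ in \eqref{3.8} and treat its two terms separately.
\begin{itemize}
\item The first term is $\frac{n}{4\sqrt5}\,2^{2n+2}L_{2n+2}=\frac{n4^nL_{2n+2}}{\sqrt5}$.
\item In the second term, $(1-z^2)^2/(8z^2)=16/40=2/5$, and the bracket equals $4^n\sqrt5\,F_{2n}$. Their product is $\frac{2\cdot4^nF_{2n}}{\sqrt5}$.
\end{itemize}
Subtracting the second term from the first gives \eqref{3.17}.

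There is no real obstacle here. The only points requiring care are the sign of the second term in \eqref{3.8} and the constant $2/5\cdot\sqrt5=2/\sqrt5$.
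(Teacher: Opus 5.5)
Your proof is correct, and it reaches the result by a different route from the paper's. With $1+\sqrt5=2\phi$ and $1-\sqrt5=2\psi$, Lemma~\ref{lem:3.3} at $z=\sqrt5$ (where $(1-z^2)^2/(8z^2)=2/5$) gives exactly \eqref{3.16} and \eqref{3.17}, and the sign of the second term of \eqref{3.8} is handled correctly.

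The paper does not return to Lemma~\ref{lem:3.3}. Instead it specializes the Chebyshev identities of Proposition~\ref{prop:3.4} at the complex point $x=i/2$, so that $\sqrt{x^2-1}/x=\sqrt5$. It then uses the known relations $F_{n+1}=(-i)^nU_n(i/2)$ and $L_n=2(-i)^nT_n(i/2)$.

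The two routes are closely related. Proposition~\ref{prop:3.4} is itself derived from Lemma~\ref{lem:3.3} by substituting $z=\sqrt{x^2-1}/x$ and rewriting $(1\pm z)^m$ via \eqref{3.4} and \eqref{3.15}. The Chebyshev--Fibonacci relations are Binet's formulas in disguise. So your argument is essentially the paper's computation with the detour through Chebyshev polynomials removed.

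Your version is a purely real, self-contained calculation. It involves no powers of $i$ and no need to fix a branch of $\sqrt{x^2-1}$; the paper's route needs $\sqrt{x^2-1}=i\sqrt5/2$ so that \eqref{3.14} yields \eqref{3.17} with the stated sign. The paper's version costs nothing once Proposition~\ref{prop:3.4} is available, and it presents the corollary as an instance of the Chebyshev connection that is the theme of Section~\ref{sec:3}.
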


\begin{proof}
It is well-known that 
\begin{equation}\label{3.18}
F_{n+1}=(-i)^nU_n(i/2),\qquad L_n=2(-i)^n T_n(i/2);
\end{equation}
see, e.g., \cite[pp. 61f]{Ri}. On the other hand, with $x=i/2$, we get
$\sqrt{x^2-1}/x=\sqrt{5}$. If we substitute this and the identities in 
\eqref{3.18} into \eqref{3.13} and \eqref{3.14}, we immediately get \eqref{3.16}
and \eqref{3.17}, respectively.
\end{proof}

\section{A companion sequence}\label{sec:4}

One may ask what happens if in \eqref{3.1} we replace the Chebyshev polynomials
of the second kind with those of the first kind, that is, if we define the
polynomials 
\begin{equation}\label{4.1}
q_n(z) := (z+1)^{2n}\sum_{k=0}^n\left(\frac{z^2-1}{(z+1)^2}\right)^k
T_k(\tfrac{z^2+1}{z^2-1}).
\end{equation}
We will see that this sequence also has interesting properties and can be 
considered a companion sequence of the polynomials $p_n(z)$. The first few
polynomials $q_n(z)$ are listed in Table~2.

\medskip
\begin{center}
{\renewcommand{\arraystretch}{1.1}
\begin{tabular}{|r|l|}
\hline
$n$ & $q_n(z)$ \\
\hline
0 & 1 \\
1 & $2z^2+2z+2$ \\
2 & $3z^4+6z^3+14z^2+6z+3$ \\
3 & $4z^6+12z^5+44z^4+40z^3+44z^2+12z+4$  \\
4 & $5z^8+20z^7+100z^6+140z^5+238z^4+140z^3+100z^2+20z+5$  \\
5 & $6z^{10}+30z^9+190z^8+360z^7+828z^6+756z^5+828z^4+\cdots$  \\
\hline
\end{tabular}}

\medskip
{\bf Table~2}: $q_n(z)$ for $0\leq n\leq 5$.
\end{center}

\medskip
Considering the proof of Proposition~\ref{prop:3.1}, it is clear that 
\eqref{4.1} also comes from a similar Cauchy product. With appropriate 
substitutions in \eqref{3.12} (see also \eqref{1.4}), we get the generating
function
\begin{equation}\label{4.2}
\frac{1-(z^2+1)t}{(1-(z+1)^2t)^2(1-(z-1)^2t)}=\sum_{n=0}^\infty q_n(z)t^n.
\end{equation}
We could now use standard methods to obtain numerous properties of the
polynomials $q_n(z)$. The following properties will be used later.

\begin{proposition}\label{prop:4.1}
For all $n\geq 1$ we have
\begin{equation}\label{4.3}
q_n(z)=p_n(z)-(z^2+1)p_{n-1}(z)
\end{equation}
and
\begin{equation}\label{4.3a}
q_n(z)+2zp_{n-1}(z)=(n+1)(z+1)^{2n}.
\end{equation}
\end{proposition}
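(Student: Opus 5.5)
The plan is to work entirely at the level of generating functions. Write $P(t)=\sum_{n\ge0}p_n(z)t^n$ and $Q(t)=\sum_{n\ge0}q_n(z)t^n$. By \eqref{1.7} and \eqref{1.2},
\[
P(t)=\frac{1}{(1-(z+1)^2t)^2(1-(z-1)^2t)},
\]
and \eqref{4.2} gives
\[
Q(t)=\bigl(1-(z^2+1)t\bigr)P(t)=P(t)-(z^2+1)\,tP(t).
\]
Comparing coefficients of $t^n$ for $n\ge1$ yields \eqref{4.3} at once. The only thing to confirm is that \eqref{4.2} really is the generating function of \eqref{4.1}. This follows from the Cauchy product mentioned in the text: substitute $x=\tfrac{z^2+1}{z^2-1}$ and $t\mapsto (z^2-1)t$ into \eqref{3.12}, exactly as in \eqref{1.4}. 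This gives
\[
\sum_k (z^2-1)^kT_k(x)t^k=\frac{1-(z^2+1)t}{(1-(z+1)^2t)(1-(z-1)^2t)},
\]
and we then multiply by $F(z,t)=1/(1-(z+1)^2t)$.

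For \eqref{4.3a}, I again compute in generating functions:
\[
Q(t)+2z\,tP(t)=\bigl(1-(z^2+1)t+2zt\bigr)P(t)=\bigl(1-(z-1)^2t\bigr)P(t)=\frac{1}{(1-(z+1)^2t)^2}=F(z,t)^2.
\]
By \eqref{2.1}, the coefficient of $t^n$ on the right is $(n+1)(z+1)^{2n}$, and this gives \eqref{4.3a} for $n\ge1$. As a consistency check, it also holds for $n=0$ if we set $p_{-1}=0$. An alternative route is to combine \eqref{4.3} with \eqref{2.7} at index $n-1$: this gives $q_n=(z-1)^2p_{n-1}+(n+1)(z+1)^{2n}-(z^2+1)p_{n-1}$, which is the same identity.

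There is no real obstacle. The only step needing care is justifying \eqref{4.2} from the definition \eqref{4.1}. In particular, one must check that the factor $\bigl(\tfrac{z^2-1}{(z+1)^2}\bigr)^k(z+1)^{2n}$ matches the Cauchy-product coefficient $(z^2-1)^kT_k(x)(z+1)^{2n-2k}$. It does, since $(z+1)^{2n}(z+1)^{-2k}=(z+1)^{2n-2k}$.
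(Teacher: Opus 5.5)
Your proof is correct and follows the paper. You obtain \eqref{4.3} by comparing coefficients in $Q(t)=(1-(z^2+1)t)P(t)$, exactly as the paper does, and your identity $(1-(z-1)^2t)P(t)=F(z,t)^2$ for \eqref{4.3a} is the generating-function form of the recurrence \eqref{2.7}, so it matches the paper's argument of combining \eqref{2.7} with \eqref{4.3} (your ``alternative route''); your check that \eqref{4.2} follows from \eqref{4.1} via \eqref{3.12} is also correct and fills in a step the paper only sketches.
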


It is interesting to compare these identities with the well-known relations
\[
T_n(z) = U_n(z) - z U_{n-1}(z),\quad
T_n(z) + U_{n-1}(z)\sqrt{z^2-1} = \big(z+\sqrt{z^2-1}\big)^n,
\]
which connect both kinds of Chebyshev polynomials.

\begin{proof}[Proof of Proposition~\ref{prop:4.1}]
Comparing the generating functions \eqref{4.2} and \eqref{1.7}, we get
\begin{align*}
\sum_{n=0}^\infty q_n(z)t^n
&=\left(1-(z^2+1)t\right)\cdot\sum_{n=0}^\infty p_n(z)t^n \\
&=\sum_{n=0}^\infty p_n(z)t^n - \sum_{n=0}^\infty(z^2+1)p_n(z)t^{n+1} \\
&=1 + \sum_{n=1}^\infty\left(p_n(z)-(z^2+1)p_{n-1}(z)\right)t^n.
\end{align*}
Upon equating coefficients of $t^n$, we get \eqref{4.3} for $n\geq 1$.

To prove \eqref{4.3a}, we rewrite \eqref{2.7} as 
\[
(n+1)(z+1)^{2n} = p_n(z)-(z-1)^2p_{n-1}(z)
\]
and subtract this identity from \eqref{4.3}, obtaining
\[
q_n(z)-(n+1)(z+1)^{2n} = -2zp_{n-1}(z).
\]
This completes the proof.
\end{proof}

Some basic properties of the polynomials $q_n(z)$ are similar to those of
$p_n(z)$. The proofs of the statements in the next corollary are quite obvious
and follow from \eqref{4.3} and the corresponding properties of $p_n(z)$.

\begin{corollary}\label{cor:4.2}
For all $n\geq 1$,\\
$(a)$\; $q_n(z)$ is a reciprocal polynomial of degree $2n$ with integer
coefficients;\\
$(b)$\; we have
\[
q_n(-1)=2^{2n-1},\qquad q_n(0)=n+1,\qquad q_n(1)=(n+2)2^{2n-1}.
\]
\end{corollary}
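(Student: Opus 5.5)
Both parts of the corollary will be read off from the identity \eqref{4.3}, $q_n(z)=p_n(z)-(z^2+1)p_{n-1}(z)$, valid for $n\geq 1$. The known properties of $p_n$ and $p_{n-1}$ will be transferred to $q_n$ through this identity. Identity \eqref{4.3a} will serve as a cross-check for the special values.

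For part (a), integer coefficients are immediate. By \eqref{2.3} (or Proposition~\ref{prop:2.3}), $p_n$ and $p_{n-1}$ have integer coefficients, and $z^2+1$ does too.

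For the degree, $p_n$ has degree $2n$ with leading coefficient $a_{2n}^{(n)}=(n+1)(n+2)/2$. The polynomial $(z^2+1)p_{n-1}(z)$ also has degree $2n$, with leading coefficient $n(n+1)/2$. The difference of these leading coefficients is $n+1\neq 0$, so $\deg q_n=2n$ with leading coefficient $n+1$, which matches Table~2.

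For reciprocity, I use \eqref{2.5} twice:
\[
z^{2n}q_n(1/z)=z^{2n}p_n(1/z)-z^2(1+z^{-2})\,z^{2n-2}p_{n-1}(1/z)=p_n(z)-(z^2+1)p_{n-1}(z)=q_n(z).
\]
The key point is that $z^2+1$ is itself reciprocal of degree 2, so the degrees line up.

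For part (b), I substitute into \eqref{4.3} using \eqref{2.6}:
\[
q_n(-1)=2^{2n}-2\cdot 2^{2n-2}=2^{2n-1},\qquad
q_n(1)=(n+1)2^{2n}-2n\,2^{2n-2}=(n+2)2^{2n-1}.
\]
For $q_n(0)$, I take the constant terms from Proposition~\ref{prop:2.3}:
\[
q_n(0)=a_0^{(n)}-a_0^{(n-1)}=\frac{(n+1)(n+2)-n(n+1)}{2}=n+1.
\]
Alternatively, \eqref{4.3a} at $z=0$ gives $q_n(0)=(n+1)$ directly, which confirms the value.

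There is no real obstacle here. The only point needing care is to check that the leading coefficients do not cancel, so that the degree is exactly $2n$; the explicit value $n+1$ settles this.
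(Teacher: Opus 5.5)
Your proof is correct and takes the same route as the paper. The paper simply says the corollary follows from \eqref{4.3} and the corresponding properties of $p_n$ (integrality, \eqref{2.5}, \eqref{2.6}, and the constant and leading coefficients from Proposition~\ref{prop:2.3}), and you carry this out in full, including the check that the leading coefficients differ by $n+1\neq 0$, so that $\deg q_n=2n$.
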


The integer sequence given by $q_n(1)$, i.e., the sum of the coefficients, has
an interesting combinatorial interpretation: According to entry A292044 in
\cite{OEIS}, it is the Wiener index of the $n$-halved cube graph.

From among the various possible recurrences we only mention the nonhomogeneous
relation
\[
q_{n+1}(z)=(z-1)^2q_n(z)+\left(z^2+(2n+4)z+1\right)(z+1)^{2n},
\]
valid for all $n\geq 0$, with $q_0(z)=1$; it is easily obtained by substituting
\eqref{2.7} into \eqref{4.3}. Other recurrences can be obtained as outlined in
the proofs of \eqref{2.8} and \eqref{2.9}. In particular, since the generating
functions of $p_n(z)$ and $q_n(z)$ have the same denominators, the recurrence
\eqref{2.9} will also be satisfied by the sequence $q_n(z)$, but with 
different initial conditions.

In analogy to Proposition~\ref{prop:2.3} we now derive explicit expressions
for the coefficients of $q_n(z)$, using the notation

\begin{equation}\label{4.6}
q_n(z) = \sum_{k=0}^{2n}b_k^{(n)} z^k.
\end{equation}

\begin{proposition}\label{prop:4.3}
For all $n\geq 0$ we have
\begin{equation}\label{4.7}
b_{2j}^{(n)}=\frac{2nj+2n-2j^2+1}{4n+2}\binom{2n+2}{2j+1}
\qquad(0\leq j\leq n)
\end{equation}
and
\begin{equation}\label{4.8}
b_{2j+1}^{(n)}=\frac{n+1}{2}\binom{2n}{2j+1}
\qquad(0\leq j\leq n-1).
\end{equation}
In particular, $b_0^{(n)}=b_{2n}^{(n)}=n+1$.
\end{proposition}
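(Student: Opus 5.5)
The cleanest route is to use the identity \eqref{4.3a} from Proposition~\ref{prop:4.1}, namely
\[
q_n(z)=(n+1)(z+1)^{2n}-2zp_{n-1}(z)\qquad(n\geq 1),
\]
and read off coefficients. The term $2zp_{n-1}(z)$ only shifts the coefficients of $p_{n-1}$ by one, so Proposition~\ref{prop:2.3} (applied with $n-1$ in place of $n$) gives everything explicitly. The case $n=0$ is just $q_0=1$, which agrees with \eqref{4.7} since $\binom{2}{1}/2=1$; \eqref{4.8} is vacuous there.

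\emph{Odd coefficients.} The coefficient of $z^{2j+1}$ in $2zp_{n-1}(z)$ is $2a_{2j}^{(n-1)}$. By \eqref{2.13} this equals $\frac{n+1}{2}\binom{2n}{2j+1}$. Hence
\[
b_{2j+1}^{(n)}=(n+1)\binom{2n}{2j+1}-\frac{n+1}{2}\binom{2n}{2j+1}=\frac{n+1}{2}\binom{2n}{2j+1},
\]
which is \eqref{4.8}.

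\emph{Even coefficients.} Here
\[
b_{2j}^{(n)}=(n+1)\binom{2n}{2j}-2a_{2j-1}^{(n-1)}.
\]
By \eqref{2.14} with $j-1$ in place of $j$,
\[
2a_{2j-1}^{(n-1)}=\frac{(n+1)(n-j)}{2j+1}\binom{2n}{2j-1}.
\]
This is valid for $1\le j\le n-1$. It also holds formally at $j=n$, where the factor $n-j$ vanishes, and at $j=0$ the term is simply absent.

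To finish, express everything as a multiple of $\binom{2n+2}{2j+1}$, using
\[
\binom{2n}{2j}=\frac{(2j+1)(2n-2j+2)(2n-2j+1)}{(2n+2)(2n+1)(2n-2j+2)}\binom{2n+2}{2j+1},
\]
or more simply the ratios $\binom{2n}{2j}/\binom{2n+2}{2j+1}=\frac{(2j+1)(2n-2j+1)}{(2n+2)(2n+1)}$ and $\binom{2n}{2j-1}/\binom{2n+2}{2j+1}=\frac{(2j+1)(2j)}{(2n+2)(2n+1)}$. Substituting these and factoring out $\frac{1}{2(2n+1)}\binom{2n+2}{2j+1}$ leaves the bracket
\[
(2j+1)(2n-2j+1)-2j(n-j)=2nj+2n-2j^2+1,
\]
which gives \eqref{4.7}.

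\emph{Edge cases.} The cases $j=0$ and $j=n$ should be checked against these ratio formulas, where one binomial may be zero, and the stated special values then follow: $b_0^{(n)}=\frac{2n+1}{4n+2}(2n+2)=n+1$.

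The main obstacle is only the bookkeeping in the even case, namely getting the binomial ratios right. It should be verified against Table~2, for example $n=2$, $j=1$: $\frac{4+4-2+1}{10}\binom{6}{3}=14$.
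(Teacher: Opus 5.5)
Your proof is correct, and it takes a somewhat different and shorter route than the paper. The paper extracts coefficients from \eqref{4.3}, $q_n=p_n-(z^2+1)p_{n-1}$. This gives the three-term relation \eqref{4.9}, $b_k^{(n)}=a_k^{(n)}-a_k^{(n-1)}-a_{k-2}^{(n-1)}$, so the paper must combine three binomial expressions from \eqref{2.13} or \eqref{2.14}; these are its ``tedious but straightforward manipulations''.

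You instead use \eqref{4.3a}, $q_n=(n+1)(z+1)^{2n}-2zp_{n-1}$. Then $b_k^{(n)}=(n+1)\binom{2n}{k}-2a_{k-1}^{(n-1)}$ involves only one coefficient of $p_{n-1}$. Since \eqref{4.3a} is just \eqref{4.3} combined with \eqref{2.7}, you have done the paper's cancellation at the polynomial level before extracting coefficients. The effect on the two cases:
\begin{itemize}
\item The odd case reduces to the one-line observation $2a_{2j}^{(n-1)}=\frac{n+1}{2}\binom{2n}{2j+1}$.
\item The even case reduces to the two-term bracket $(2j+1)(2n-2j+1)-2j(n-j)=2nj+2n-2j^2+1$, which checks out.
\end{itemize}

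Your binomial ratios $\frac{(2j+1)(2n-2j+1)}{(2n+2)(2n+1)}$ and $\frac{(2j+1)(2j)}{(2n+2)(2n+1)}$ are right. Your first displayed version carries a redundant factor $2n-2j+2$ in numerator and denominator, but that is harmless. Because of the factors $2j$ and $n-j$, the even-case formula holds uniformly for $0\le j\le n$, including $n=1$ where \eqref{2.14} has an empty range. The only small omission is that you should state $b_{2n}^{(n)}=n+1$ explicitly alongside $b_0^{(n)}$; it is immediate from $j=n$.

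Overall, the paper's route is the more mechanical one. Yours gives the closed forms almost directly and avoids the simplification step the paper leaves to the reader.
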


\begin{proof}
Combining \eqref{4.3} with \eqref{2.12}, we get 
\begin{align*}
q_n(z)&=\sum_{k=0}^{2n}a_k^{(n)}z^k-\sum_{k=0}^{2n-2}a_k^{(n-1)}z^k
-\sum_{k=0}^{2n-2}a_k^{(n-1)}z^{k+2} \\
&= \sum_{k=0}^{2n}\left(a_k^{(n)}-a_k^{(n-1)}-a_{k-2}^{(n-1)}\right)z^k.
\end{align*}
Comparing this with \eqref{4.6}, we get
\begin{equation}\label{4.9}
b_k^{(n)} = a_k^{(n)}-a_k^{(n-1)}-a_{k-2}^{(n-1)},
\end{equation}
with the understanding that $a_k^{(n-1)}=0$ whenever $k<0$ or $k>2n-2$. For even
$k$, we combine \eqref{2.13} with \eqref{4.9}, and after some tedious but
straightforward manipulations we get \eqref{4.7}. Similarly, combining
\eqref{2.14} with \eqref{4.9} gives \eqref{4.8}.
\end{proof}

\section{Resultants and discriminants}\label{sec:5}

The resultant of two polynomials
\begin{equation}\label{5.1}
\begin{cases}
&f(x)=a_\mu x^\mu+\cdots+a_1x+a_0,\\
&g(x)=b_m x^m+\cdots+b_1x+b_0,
\end{cases}
\end{equation}
of degrees $\mu$ and $m$, respectively, is an important object in algebra and
number theory which, roughly speaking, measures how far the zeros
$\alpha_1,\ldots, \alpha_\mu$ of $f$ are apart from the zeros
$\beta_1,\ldots, \beta_m$ of $g$. The resultant can be defined by
\[
{\rm Res}(f,g)=a_{\mu}^mb_m^{\mu}\prod_{i=1}^{\mu}
\prod_{j=1}^m\left(\alpha_i-\beta_j\right),
\]
which shows that it vanishes if and only if $f$ and $g$ have a 
zero in common. Another useful and equivalent definition is by way of the
Sylvester determinant, the determinant of a certain $(\mu+m)\times(\mu+m)$
matrix which has the coefficients of $f$ and $g$ as entries. This implies
that the resultant of two polynomials with integer coefficients is always an
integer. Further important properties are 
\begin{align}
{\rm Res}(f,g) &= a_{\mu}^m\prod_{j=1}^\mu g(\alpha_j),\label{5.3}\\
{\rm Res}(f,g) &= (-1)^{\mu m}{\rm Res}(g,f),\label{5.4}
\end{align}
and when $p$ and $q$ are arbitrary polynomials in $x$, then
\begin{equation}\label{5.5}
{\rm Res}(f,pq) = {\rm Res}(f,p)\cdot {\rm Res}(f,q).
\end{equation}

Finally, we require the following lemma, which can be found as Lemma~4.1 in
\cite{DS} or, with a different normalization, in \cite[p.~58]{PZ}. All other
properties mentioned above can also be found in \cite{DS}, with extensions and
references.

\begin{lemma}\label{lem:5.1}
Let $f, g$ be as in \eqref{5.1}. If we can write
\[
f(x) = q(x)g(x) + r(x)
\]
with polynomials $q, r$ and $\nu:=\deg{r}$, then
\begin{equation}\label{5.7}
{\rm Res}(g,f) = b_m^{\mu-\nu}{\rm Res}(g,r).
\end{equation}
\end{lemma}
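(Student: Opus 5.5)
The plan is to use the product formula \eqref{5.3} for the resultant and to account for the zeros of $g$ that are lost when $g$ is replaced by $r$. The leading coefficient of $g$ is $b_m$, and we let $\beta_1,\dots,\beta_m$ be the zeros of $g$, counted with multiplicity. Applying \eqref{5.3} with the roles of $f$ and $g$ interchanged (valid by the definition as a double product), we get
\[
{\rm Res}(g,f)=b_m^{\mu}\prod_{j=1}^m f(\beta_j).
\]

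The key step is to substitute $f=qg+r$. Since $g(\beta_j)=0$ for every $j$, we have $f(\beta_j)=r(\beta_j)$ for all $j$. Therefore
\[
{\rm Res}(g,f)=b_m^{\mu}\prod_{j=1}^m r(\beta_j).
\]

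Next we apply \eqref{5.3} to the pair $(g,r)$, where $r$ has degree $\nu$. This gives ${\rm Res}(g,r)=b_m^{\nu}\prod_{j=1}^m r(\beta_j)$. Dividing the previous display by this one, when $b_m\neq0$, yields ${\rm Res}(g,f)=b_m^{\mu-\nu}{\rm Res}(g,r)$, which is \eqref{5.7}. The identity in fact holds without any division, because both sides equal $b_m^{\mu}\prod_j r(\beta_j)$.

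The only delicate point is the degree bookkeeping. The resultant depends on the formal degrees used: for $f$ we use $\mu$, and for $r$ we use its actual degree $\nu$. We should also treat the case $r=0$. There ${\rm Res}(g,f)=0$ because $g$ divides $f$, and the identity holds with the convention ${\rm Res}(g,0)=0$. If $\nu>\mu$ could occur, the exponent is negative, but the formula remains consistent via the product expression. In the applications one has $\nu<m\le\mu$, so this case does not arise. No other obstacle is expected.
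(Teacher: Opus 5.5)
Your argument is correct, and it is the standard one-line proof through \eqref{5.3}: ${\rm Res}(g,f)=b_m^{\mu}\prod_j f(\beta_j)=b_m^{\mu}\prod_j r(\beta_j)=b_m^{\mu-\nu}{\rm Res}(g,r)$. It works for an arbitrary $r$, not only a Euclidean remainder; the paper gives no proof of its own and simply cites the literature for this lemma.

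One correction to your closing remark: the claim $\nu<m\le\mu$ does not hold in the paper's applications. In Propositions~\ref{prop:5.2} and~\ref{prop:5.3}, $r$ is deliberately not a remainder, and $\nu=\mu\ge m$ (for instance $\nu=\mu=2n+2>m=2n$). This is harmless, because your proof never uses $\nu<m$.
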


We are now ready to state and prove evaluations of certain resultants 
involving the polynomials $p_n(z)$ and $q_n(z)$.

\begin{proposition}\label{prop:5.2}
For all $n\geq 0$ we have
\begin{equation}\label{5.8}
{\rm Res}(p_n, p_{n+1}) = (n+2)^{2n}\cdot 2^{4n(n+1)}.
\end{equation}
\end{proposition}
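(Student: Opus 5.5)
The plan is to evaluate the resultant directly from the product formula \eqref{5.3}, with the non-homogeneous recurrence \eqref{2.7} doing all the work. Take $f=p_n$, which has degree $\mu=2n$ and, by Proposition~\ref{prop:2.3}, leading coefficient $a:=a_{2n}^{(n)}=(n+1)(n+2)/2$. Take $g=p_{n+1}$, of degree $m=2n+2$. Let $\alpha_1,\ldots,\alpha_{2n}$ be the zeros of $p_n$. Then \eqref{5.3} gives
\[
{\rm Res}(p_n,p_{n+1}) = a^{2n+2}\prod_{j=1}^{2n}p_{n+1}(\alpha_j).
\]

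The key step is to evaluate $p_{n+1}$ at a zero of $p_n$. Since $p_n(\alpha_j)=0$, the recurrence \eqref{2.7} gives $p_{n+1}(\alpha_j)=(n+2)(\alpha_j+1)^{2n+2}$. Hence
\[
\prod_{j=1}^{2n}p_{n+1}(\alpha_j)=(n+2)^{2n}\Bigl(\prod_{j=1}^{2n}(\alpha_j+1)\Bigr)^{2n+2}.
\]
This is the same idea as Lemma~\ref{lem:5.1}: replace $p_{n+1}$ by its remainder modulo $p_n$. However, the ``remainder'' $(n+2)(z+1)^{2n+2}$ here has degree exceeding $\deg p_n$, so working with \eqref{5.3} directly is cleaner.

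Next, the product $\prod_j(\alpha_j+1)$ comes from the value at $-1$. We have $p_n(-1)=a\prod_j(-1-\alpha_j)=a\prod_j(\alpha_j+1)$, since the degree $2n$ is even. By \eqref{2.6}, $p_n(-1)=2^{2n}$, so $\prod_j(\alpha_j+1)=2^{2n}/a$. Substituting this in, the powers of $a$ cancel exactly:
\[
{\rm Res}(p_n,p_{n+1}) = a^{2n+2}(n+2)^{2n}\,\frac{2^{2n(2n+2)}}{a^{2n+2}} = (n+2)^{2n}\,2^{4n(n+1)},
\]
which is \eqref{5.8}.

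There is no real obstacle; the only points needing care are the following.
\begin{itemize}
\item The sign $(-1)^{2n}=1$ in the relation between $p_n(-1)$ and $\prod_j(\alpha_j+1)$.
\item The argument uses the orientation ${\rm Res}(p_n,p_{n+1})$, matching \eqref{5.3}. In any case ${\rm Res}(f,g)={\rm Res}(g,f)$ here by \eqref{5.4}, since $\mu m$ is even.
\item The degenerate case $n=0$ must be checked separately. There $p_0=1$ has no zeros, the resultant is $a^{2}=1$ with $a=1$, and the formula also gives $1$.
\item Zeros of $p_n$ counted with multiplicity cause no difficulty, because \eqref{5.3} holds in that form.
\end{itemize}
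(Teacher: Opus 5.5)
Your proof is correct and is essentially the paper's argument. Both proofs use \eqref{2.7} to replace $p_{n+1}$ by $(n+2)(x+1)^{2n+2}$ at the zeros of $p_n$, and both finish with $p_n(-1)=2^{2n}$; the paper packages this through Lemma~\ref{lem:5.1} and \eqref{5.4}, evaluating $p_n$ at the $2n+2$ roots $-1$, whereas you evaluate $p_{n+1}$ at the zeros of $p_n$ and convert $\prod_j(\alpha_j+1)$ via the leading coefficient. Your remark that the remainder's degree is a problem does not apply: Lemma~\ref{lem:5.1} places no degree restriction on $r$, and the paper applies it with $\deg r=\deg p_{n+1}$.
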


\begin{proof} 
We apply Lemma~5.1 to \eqref{2.7} by setting $f(x)=p_{n+1}(x)$, $g(x)=p_n(x)$,
$q(x)=(x-1)^2$, and $r(x)=(n+2)(x+1)^{2n+2}$. We also note that $\mu=\nu=2n+2$,
and so by \eqref{5.7} we have
\[
{\rm Res}(p_n(x), p_{n+1}(x)) = {\rm Res}(p_n(x), (n+2)(x+1)^{2n+2}).
\]
From \eqref{5.3} and \eqref{2.6} we obtain
\[
{\rm Res}((n+2)(x+1)^{2n+2},p_n(x))
=(n+2)^{2n}\prod_{j=1}^{2n+2}p_n(-1) = (n+2)^{2n}\left(2^{2n}\right)^{2n+2},
\]
and with \eqref{5.4} we obtain \eqref{5.8}. 
\end{proof}

For the sake of comparison with \eqref{5.8}, it is worth mentioning that
\[
{\rm Res}(U_n, U_{n+1})=(-1)^\frac{n(n+1)}{2} 2^{n(n+1)},\quad
{\rm Res}(T_n, T_{n+1})=(-1)^\frac{n(n+1)}{2} 2^{(n-1)n}.
\]
These and other more general results of this type can be found in \cite{JRT},
\cite{Lo}, and \cite{Ya}. Next we prove two identities involving both 
polynomial sequences $p_n(x)$ and $q_n(x)$.

\begin{proposition}\label{prop:5.3}
For all $n\geq 0$ we have
\begin{equation}\label{5.10}
{\rm Res}(p_n, q_{n+1}) = {\rm Res}(p_n, p_{n+1}) = (n+2)^{2n}\cdot 2^{4n(n+1)}
\end{equation}
and
\begin{equation}\label{5.11}
{\rm Res}(p_n, q_n) = (n+1)^{2n-2}\cdot 2^{2(n-1)(2n+1)}\cdot
\begin{cases}
(n+2)^2 & (n\; \hbox{even}),\\
(n+1)^2 & (n\; \hbox{odd}).
\end{cases}
\end{equation}
\end{proposition}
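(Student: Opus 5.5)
For \eqref{5.10} I would argue exactly as in the proof of Proposition~\ref{prop:5.2}. By \eqref{4.3} with $n$ replaced by $n+1$, we have $q_{n+1}(x)=p_{n+1}(x)-(x^2+1)p_n(x)$. Apply Lemma~\ref{lem:5.1} with $f=q_{n+1}$, $g=p_n$, quotient $-(x^2+1)$ and remainder $r=p_{n+1}$. Both $f$ and $r$ have degree $2n+2$, so $\mu-\nu=0$ and the factor $b_m^{\mu-\nu}$ equals $1$. Hence ${\rm Res}(p_n,q_{n+1})={\rm Res}(p_n,p_{n+1})$, and Proposition~\ref{prop:5.2} gives the value. (For $n=0$ everything is trivial, since $p_0=1$.)

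For \eqref{5.11} I would use \eqref{4.3} in the other direction: $q_n=p_n-(x^2+1)p_{n-1}$. Apply Lemma~\ref{lem:5.1} with $f=q_n$, $g=p_n$, quotient $1$ and remainder $r=-(x^2+1)p_{n-1}$. Both $f$ and $r$ have degree $2n$, so again no leading-coefficient factor appears. Using multiplicativity \eqref{5.5}, this gives
\[
{\rm Res}(p_n,q_n)={\rm Res}(p_n,-1)\,{\rm Res}(p_n,x^2+1)\,{\rm Res}(p_n,p_{n-1}).
\]
The first factor is $(-1)^{2n}=1$. For the third, all degrees are even, so by \eqref{5.4} it equals ${\rm Res}(p_{n-1},p_n)=(n+1)^{2n-2}2^{4(n-1)n}$ by Proposition~\ref{prop:5.2}. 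For the middle factor, \eqref{5.4} and \eqref{5.3} applied to the monic polynomial $x^2+1$ give $p_n(i)p_n(-i)=|p_n(i)|^2$, since $p_n$ has real coefficients.

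The only real computation is evaluating $p_n(i)$, and I would do it with the closed form \eqref{2.10}. We have $(i-1)^{2n+4}=(-2i)^{n+2}$, $(i+1)^{2n+2}=(2i)^{n+1}$, $i^2-(4n+6)i+1=-(4n+6)i$, and $16i^2=-16$. The numerator of \eqref{2.10} then becomes $2^{n+1}i^{n+2}\bigl(2(-1)^n+4n+6\bigr)$. The bracket equals $4(n+2)$ for $n$ even and $4(n+1)$ for $n$ odd, which is exactly the source of the parity split in \eqref{5.11}. Therefore
\[
|p_n(i)|^2=\frac{4^{n+1}\cdot 16\,(n+2)^2}{256}=2^{2n-2}(n+2)^2
\]
for $n$ even, and the same expression with $(n+1)^2$ in place of $(n+2)^2$ for $n$ odd.

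Multiplying the three factors, the power of $2$ is $4n(n-1)+2n-2=2(n-1)(2n+1)$, the other factors are $(n+1)^{2n-2}$ and the parity-dependent square, and this is \eqref{5.11}. The main point to watch is the bookkeeping of signs and degrees in Lemma~\ref{lem:5.1} and \eqref{5.4}. Since all degrees involved are even, every sign $(-1)^{\mu m}$ is $+1$. The evaluation at $i$ is the only genuinely new step, and it must be carried out carefully; the small cases $n=0,1$ should be checked directly against Tables~1 and~2.
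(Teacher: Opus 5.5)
Your proposal is correct and follows essentially the paper's proof: Lemma~\ref{lem:5.1} applied to \eqref{4.3}, multiplicativity \eqref{5.5}, Proposition~\ref{prop:5.2}, and an evaluation at $\pm i$. There are two minor differences. In the second part you reduce modulo $p_n$ rather than $q_n$, so Proposition~\ref{prop:5.2} applies directly instead of via \eqref{5.10}. You also compute $p_n(\pm i)$ from the closed form \eqref{2.10} rather than from \eqref{2.4} and the alternating sum $1-2+3-\cdots+(-1)^n(n+1)$. Both routes give $p_n(i)p_n(-i)=2^{2n-2}(n+2)^2$ for $n$ even and $2^{2n-2}(n+1)^2$ for $n$ odd.
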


\begin{proof} 
To prove \eqref{5.10}, we rewrite \eqref{4.3} as
\[
p_{n+1}(x)=(1+x^2)p_n(x)+q_{n+1}(x).
\]
Since $\mu=\deg{p_{n+1}}=2n+2=\deg{q_{n+1}}=\nu$, Lemma~\ref{lem:5.1} gives
\[
{\rm Res}(p_n, p_{n+1}) = {\rm Res}(p_n, q_{n+1}),
\]
and with \eqref{5.8} we get the second equality in \eqref{5.10}.

Next, we write \eqref{4.3} as
\[
p_n(x)=q_n(x)+(1+x^2)p_{n-1}(x).
\]
With the aim of using Lemma~\ref{lem:5.1} again, we take $q(x)=1$ (identically)
and $r(x)=(1+x^2)p_{n-1}(x)$, so that we have $\mu=\deg{p_n}=2n=\deg{r}=\nu$.
Now \eqref{5.7} and \eqref{5.5} give
\begin{equation}\label{5.13}
{\rm Res}(p_n, q_n) = {\rm Res}(q_n, (1+x^2)p_{n-1})
={\rm Res}(q_n, 1+x^2)\cdot {\rm Res}(q_n, p_{n-1}).
\end{equation}
By \eqref{5.3} and \eqref{4.3} we have
\begin{equation}\label{5.14}
{\rm Res}(q_n, 1+x^2)=q_n(i)q_n(-i) = p_n(i)p_n(-i),
\end{equation}
and by \eqref{2.4}, 
\begin{equation}\label{5.15}
p_n(\pm i) = (\pm i-1)^{2n}\sum_{k=0}^n(k+1)
\left(\frac{\pm i+1}{\pm i-1}\right)^{2k}.
\end{equation}
Since $(\frac{\pm i+1}{\pm i-1})^2=-1$ and $(i-1)(-i-1)=2$, we get with
\eqref{5.15},
\begin{equation}\label{5.16}
p_n(i)p_n(-i) = 2^{2n}\left(\sum_{k=0}^n(-1)^k(k+1)\right)^2.
\end{equation}
Finally, it is easy to see that the sum in parentheses in \eqref{5.16} is
\[
1-2+3-4+\cdots+(-1)^n(n+1) = \begin{cases}
\tfrac{1}{2}(n+2) & (n\; \hbox{even}),\\
-\tfrac{1}{2}(n+1) & (n\; \hbox{odd}).
\end{cases}
\]
Combining this with \eqref{5.16}, \eqref{5.14}, \eqref{5.13} and \eqref{5.10},
we get \eqref{5.11}, as desired.
\end{proof}

One of the most important invariants of a polynomial is its discriminant. Since
we already noted certain similarities between our polynomials $p_n(x)$ and
$ q_n(x)$ and the Chebyshev polynomials, we mention the fairly recent papers
\cite{DS, DU, GI, St, Tr1, Tr2} which have dealt with discriminants of
Chebyshev-like polynomials.

We begin with recalling the definition of the discriminant. Given a polynomial
$f(x)$ as in \eqref{5.1}, the discriminant of $f$ is usually defined by
\begin{equation}\label{5.17}
{\rm Disc}(f)=(-1)^{\frac{\mu(\mu-1)}{2}}a_{\mu}^{-1}{\rm Res}(f,f'),
\end{equation}
where $f'$ is the derivative of $f$. With \eqref{5.3} we then have
\begin{equation}\label{5.18}
{\rm Disc}(f)=(-1)^{\frac{\mu(\mu-1)}{2}}a_{\mu}^{\mu-2}
\prod_{j=1}^{\mu}f'(\alpha_j),
\end{equation}
where $\alpha_1, \ldots, \alpha_{\mu}$ are the zeros of $f$. It follows from
\eqref{5.18} that ${\rm Disc}(f)=0$ if and only if $f$ has multiple zeros, 
which is a key property of the discriminant. We are now ready to prove the
following result.

\begin{proposition}\label{prop:5.4}
For all $n\geq 1$ we have
\begin{equation}\label{5.19}
{\rm Disc}(p_n) = (-1)^n 2^{4n^2+2}(n+1)^{2n-3}(n+2)^{2n-2}.
\end{equation}
\end{proposition}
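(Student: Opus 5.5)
The plan is to evaluate ${\rm Disc}(p_n)$ through \eqref{5.18}. The idea is to find a closed form for $p_n'(\alpha)$ at each zero $\alpha$ of $p_n$, using the explicit formula \eqref{2.10}. The product over the zeros should then reduce to the values $p_n(\pm1)$ in \eqref{2.6} and the constant term of $p_n$.

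Write $c=4n+6$ and $Q(z)=z^2-cz+1$, so that \eqref{2.10} reads $16z^2p_n(z)=(z-1)^{2n+4}-Q(z)(z+1)^{2n+2}$. Note first that $0$ and $\pm1$ are not zeros of $p_n$: by \eqref{2.6} and Proposition~\ref{prop:2.3}, the values $p_n(1)$, $p_n(-1)$ and $p_n(0)=a_0^{(n)}$ are all nonzero. At a zero $\alpha$ of $p_n$ we therefore get
\[
(\alpha-1)^{2n+4}=Q(\alpha)(\alpha+1)^{2n+2}.
\]

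Next, differentiate \eqref{2.10} (multiplied by $16z^2$) and evaluate at $\alpha$. The term $32zp_n(z)$ vanishes there, which gives
\[
16\alpha^2p_n'(\alpha)=(2n+4)(\alpha-1)^{2n+3}-(\alpha+1)^{2n+1}\bigl[(2\alpha-c)(\alpha+1)+(2n+2)Q(\alpha)\bigr].
\]
Multiply by $\alpha-1$ and replace $(\alpha-1)^{2n+4}$ using the relation above. Then everything becomes $(\alpha+1)^{2n+1}$ times a cubic in $\alpha$. The main computational step is to check that this cubic collapses. Since $(2n+4)(\alpha+1)-(2n+2)(\alpha-1)=2\alpha+c$, it equals
\[
Q(\alpha)(2\alpha+c)-(\alpha^2-1)(2\alpha-c)=(4-c^2)\alpha=-16(n+1)(n+2)\alpha.
\]
Hence we expect the key identity
\[
p_n'(\alpha)=-\frac{(n+1)(n+2)(\alpha+1)^{2n+1}}{\alpha(\alpha-1)}.
\]
This cancellation is the step I regard as the crux. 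If it fails, I would fall back on Lemma~\ref{lem:5.1}, applied with a derivative identity obtained from \eqref{2.7}.

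Finally, take the product over the $2n$ zeros. Let $a=(n+1)(n+2)/2$ be the leading coefficient of $p_n$ (Proposition~\ref{prop:2.3}). Then:

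\begin{itemize}
\item $\prod(\alpha+1)=p_n(-1)/a=4^n/a$;
\item $\prod(\alpha-1)=p_n(1)/a=(n+1)4^n/a$;
\item $\prod\alpha=1$, since $p_n$ is reciprocal by \eqref{2.5}.
\end{itemize}

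The sign in \eqref{5.18} is $(-1)^{n(2n-1)}=(-1)^n$. Putting these together,
\[
{\rm Disc}(p_n)=(-1)^n a^{2n-2}\,\bigl((n+1)(n+2)\bigr)^{2n}\frac{(4^n/a)^{2n+1}}{(n+1)4^n/a}.
\]
The powers of $a$ combine to $a^{-2}=4/\bigl((n+1)^2(n+2)^2\bigr)$, and the powers of $4$ give $2^{4n^2}$. This yields exactly $(-1)^n2^{4n^2+2}(n+1)^{2n-3}(n+2)^{2n-2}$, which is \eqref{5.19}.
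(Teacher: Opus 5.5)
Your proof is correct. Its overall architecture matches the paper's: both evaluate ${\rm Disc}(p_n)$ via \eqref{5.18}, both reach the same key identity $p_n'(\alpha)=-(n+1)(n+2)(\alpha+1)^{2n+1}/(\alpha(\alpha-1))$ at each zero, and both finish with $p_n(\pm1)$ from \eqref{2.6} and $\prod\alpha_i=1$ exactly as you do.

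The genuine difference is in how the key identity is derived. The paper works through the sum representation:
\begin{itemize}
\item it differentiates \eqref{2.4} to get Lemma~\ref{lem:5.5};
\item it evaluates $\sum_{k=1}^n k(k+1)y^{k-1}$ by differentiating a geometric series twice, giving \eqref{5.23};
\item it rewrites the zero relation \eqref{5.23a} as \eqref{5.24} and combines it with \eqref{5.26} to collapse the expression.
\end{itemize}
You instead differentiate the closed form \eqref{2.10} after clearing the factor $16z^2$, and substitute the zero relation once. The entire simplification then reduces to the single polynomial identity $Q(\alpha)(2\alpha+c)-(\alpha^2-1)(2\alpha-c)=(4-c^2)\alpha$, which I checked.

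Your route is shorter and self-contained. It avoids Lemma~\ref{lem:5.5} and the auxiliary summation formula entirely. You also explicitly verify that $0$ and $\pm1$ are not zeros of $p_n$. The paper uses this only tacitly, when it divides by $\alpha_i$, $\alpha_i\pm1$ and $1-y$.

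The paper's detour through \eqref{2.4} does produce Lemma~\ref{lem:5.5} as a derivative formula valid for all $z$, but the discriminant computation does not need it. Your fallback to Lemma~\ref{lem:5.1} is unnecessary, since the cancellation goes through.
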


For the proof of \eqref{5.19} we require the following derivative identity.

\begin{lemma}\label{lem:5.5}
For all $n\geq 1$ we have
\begin{equation}\label{5.20}
p_n'(z) = \frac{2n}{z-1}\cdot p_n(z)-4(z-1)^{2n-2}
\sum_{k=1}^n k(k+1)\left(\frac{z+1}{z-1}\right)^{2k-1}.
\end{equation}
\end{lemma}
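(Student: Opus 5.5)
We prove \eqref{5.20} by differentiating the representation \eqref{2.4} directly. Write
\[
w=w(z):=\frac{z+1}{z-1},\qquad S_n(w):=\sum_{k=0}^n(k+1)w^{2k},
\]
so that \eqref{2.4} reads $p_n(z)=(z-1)^{2n}S_n(w)$. We will compute the derivative of each factor, apply the product rule, and then re-express the term coming from $(z-1)^{2n}$ in terms of $p_n$ itself.

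First, the quotient rule gives
\[
w'(z)=\frac{(z-1)-(z+1)}{(z-1)^2}=\frac{-2}{(z-1)^2}.
\]
Next, differentiating termwise in $w$ and dropping the vanishing $k=0$ term,
\[
\frac{d}{dw}S_n(w)=\sum_{k=1}^n 2k(k+1)w^{2k-1}.
\]
By the chain rule, therefore,
\[
\frac{d}{dz}S_n(w)=-\frac{4}{(z-1)^2}\sum_{k=1}^n k(k+1)w^{2k-1}.
\]

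Now apply the product rule to $p_n(z)=(z-1)^{2n}S_n(w)$. The first term is
\[
2n(z-1)^{2n-1}S_n(w)=\frac{2n}{z-1}\,(z-1)^{2n}S_n(w)=\frac{2n}{z-1}\,p_n(z),
\]
where the last equality uses \eqref{2.4} again. The second term is
\[
(z-1)^{2n}\cdot\Bigl(-\frac{4}{(z-1)^2}\Bigr)\sum_{k=1}^n k(k+1)w^{2k-1}
=-4(z-1)^{2n-2}\sum_{k=1}^n k(k+1)\Bigl(\frac{z+1}{z-1}\Bigr)^{2k-1}.
\]
Adding the two terms gives exactly \eqref{5.20}.

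This is an identity between rational functions, valid for $z\neq1$. Both sides are in fact polynomials: on the right, the factor $(z-1)^{2n-2}$ clears every denominator because $2k-1\le 2n-1$, and the apparent pole at $z=1$ cancels between the two terms. Consequently the identity also holds at $z=1$ by continuity. There is no real obstacle here; the only step needing care is the bookkeeping of the factor $-2/(z-1)^2$ coming from $w'(z)$, which combines with the factor $2$ from $\frac{d}{dw}w^{2k}$ to produce the constant $-4$ and the power $(z-1)^{2n-2}$.
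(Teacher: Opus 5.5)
Your proof is correct and is the paper's argument: differentiate \eqref{2.4} using the product and chain rules, then use \eqref{2.4} again to rewrite the first term as $\frac{2n}{z-1}p_n(z)$. One small slip in your closing remark is that $(z-1)^{2n-2}$ does not clear the denominator of the $k=n$ term, which leaves $-4n(n+1)(z+1)^{2n-1}/(z-1)$; it is exactly this simple pole that cancels the one in $\frac{2n}{z-1}p_n(z)$, and the remark is not needed anyway, since \eqref{5.20} is only applied at zeros of $p_n$, where $z\neq 1$.
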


\begin{proof}
We differentiate both sides of \eqref{2.4} and then use \eqref{2.4} again. After
some straightforward manipulations we then get \eqref{5.20}.
\end{proof}

\begin{proof}[Proof of Proposition~\ref{prop:5.4}]
We use \eqref{5.18} with $f=p_n, \mu=2n$, and $a_{\mu}=(n+1)(n+2)/2$. Then
\begin{equation}\label{5.21}
{\rm Disc}(p_n)=(-1)^n \left(\frac{(n+2)(n+1)}{2}\right)^{2n-2}
\cdot\prod_{i=1}^{2n}p_n'(\alpha_i),
\end{equation}
where $\alpha_1, \ldots, \alpha_{2n}$ are the zeros of $p_n(z)$. Now, by
\eqref{5.20} we have
\begin{equation}\label{5.22}
p_n'(\alpha_i) =-4\cdot \frac{(\alpha_i-1)^{2n-1}}{\alpha_i+1}\cdot 
\sum_{k=1}^n k(k+1)\left(\frac{\alpha_i+1}{\alpha_i-1}\right)^{2k}.
\end{equation}
To evaluate the sum in \eqref{5.22}, we begin with the identity
\[
1+y+y^2+\cdots+y^{n+1} = \frac{1-y^{n+2}}{1-y}.
\]
After differentiating twice, we get
\begin{equation}\label{5.23}
\sum_{k=1}^n k(k+1)y^{k-1} = \frac{1}{(1-y)^2}\left(2\cdot\frac{1-y^{n+1}}{1-y}
-(n+1)(n+2)y^n+n(n+1)y^{n+1}\right).
\end{equation}
Next, we set $z=\alpha_i$ in \eqref{2.10} for any $i=1,2, \ldots, 2n$. 
Writing $\alpha=\alpha_i$ for convenience of notation, we get
\begin{equation}\label{5.23a}
(\alpha-1)^2
=(\alpha^2-(4n+6)\alpha+1)\left(\frac{\alpha+1}{\alpha-1}\right)^{2(n+1)},
\end{equation}
which can be rewritten as
\[
(\alpha-1)^2-(\alpha-1)^2\left(\frac{\alpha+1}{\alpha-1}\right)^{2(n+1)}
=(n+1)\left((\alpha-1)^2-(\alpha+1)^2\right)
\left(\frac{\alpha+1}{\alpha-1}\right)^{2(n+1)},
\]
and after some further straightforward manipulation we get
\begin{equation}\label{5.24}
\frac{1-y^{n+1}}{1-y}=(n+1)y^{n+1},\qquad\hbox{where}\quad
y= \left(\frac{\alpha+1}{\alpha-1}\right)^2.
\end{equation}
Substituting this into the right-hand side of \eqref{5.23}, we get
\begin{align}
\sum_{k=1}^n k(k+1)y^{k-1} &= \frac{1}{(1-y)^2}\left(2(n+1)y^{n+1}
-(n+1)(n+2)y^n+n(n+1)y^{n+1}\right) \label{5.25}\\
&=-\frac{(n+1)(n+2)}{1-y}y^{n},\nonumber
\end{align}
where $y$ is as given in \eqref{5.24}. By \eqref{5.23a}, an easy calculation 
gives
\begin{equation}\label{5.26}
\frac{1}{1-\left(\frac{\alpha+1}{\alpha-1}\right)^2}\cdot
\left(\frac{\alpha+1}{\alpha-1}\right)^{2n+2}
=\frac{(\alpha+1)^{2n+2}}{-4\alpha(\alpha-1)^{2n}}.
\end{equation}
With \eqref{5.22}, \eqref{5.25} and \eqref{5.26} we then get
\[
p_n'(\alpha_i) 
=-(n+1)(n+2)\cdot\frac{(\alpha_i+1)^{2n+1}}{\alpha_i(\alpha_i-1)},
\]
and with \eqref{5.21},
\begin{equation}\label{5.28}
{\rm Disc}(p_n)=(-1)^n \frac{((n+1)(n+2))^{4n-2}}{2^{2n-2}}
\cdot\prod_{i=1}^{2n}\frac{(\alpha_i+1)^{2n+1}}{\alpha_i(\alpha_i-1)}.
\end{equation}
To evaluate the product in \eqref{5.28}, we use the fact that the identity
\[
p_n(z)=\frac{(n+1)(n+2)}{2}\cdot\prod_{i=1}^{2n}(z-\alpha_i),
\]
together with \eqref{2.6}, implies
\[
\prod_{i=1}^{2n}(1-\alpha_i)=\frac{2^{2n+1}}{n+2},\qquad
\prod_{i=1}^{2n}(1+\alpha_i)=\frac{2^{2n+1}}{(n+1)(n+2)},\qquad
\prod_{i=1}^{2n}\alpha_i=1.
\]
Finally, these identities, together with \eqref{5.28}, give \eqref{5.19} after
collecting the powers of 2, $n+1$ and $n+2$.
\end{proof}

No other identities such as those in Propositions~\ref{prop:5.2}--\ref{prop:5.4}
seem to exist, and in particular there seems to be no such
identity (that is, having only small prime factors) for Disc$(q_n)$.

\section{Some irreducibility results}\label{sec:6}

Computations using the {\tt irreduc} function in Maple suggest that the 
polynomials $p_n(x)$ and $q_n(x)$ are irreducible over $\mathbb Q$. We are 
going to prove the following special case of $p_n(x)$.

\begin{theorem}\label{thm:6.1}
The polynomials $p_n(x)$ are irreducible over $\mathbb Q$ when $n=p-2$, 
where $p$ is an odd prime.
\end{theorem}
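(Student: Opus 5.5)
The plan is to apply a Newton polygon argument at the prime $p$, using the explicit coefficients from Proposition~\ref{prop:2.3}, and then to rule out the one remaining factorization by reciprocity together with the evaluations \eqref{2.6}.

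With $n=p-2$ we have $n+2=p$ and $2n+2=2p-2$. Proposition~\ref{prop:2.3} then gives
\[
a_{2j}^{(n)}=\frac{p}{4}\binom{2p-2}{2j+1},\qquad
a_{2j+1}^{(n)}=\frac{p\,(p-2-j)}{2(2j+3)}\binom{2p-2}{2j+1}.
\]
Since $p$ is odd, every even-index coefficient has $p$-adic valuation at least $1$. For an odd-index coefficient the factor $p$ in the numerator survives unless $2j+3=p$. Inside the range $0\le j\le n-1=p-3$ this happens only for $j=(p-3)/2$, i.e.\ for the coefficient of $z^{p-2}$, which is the middle coefficient of the degree-$(2p-4)$ polynomial.

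For that coefficient:
\begin{itemize}
\item $n-j=(p-1)/2$ is prime to $p$;
\item $\binom{2p-2}{p-2}=(2p-2)!/((p-2)!\,p!)$ has $p$-adic valuation $1-1=0$.
\end{itemize}
So $v_p(a_{p-2}^{(n)})=0$. The extreme coefficients $a_0^{(n)}=a_{2n}^{(n)}=p(p-1)/2$ have valuation exactly $1$.

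Consequently the Newton polygon of $p_n$ at $p$ consists of exactly two segments, from $(0,1)$ to $(p-2,0)$ and from $(p-2,0)$ to $(2p-4,1)$. Each segment has horizontal length $p-2$ and height $1$, and these are coprime. By the standard Newton polygon (Dumas) theory, every irreducible factor of $p_n$ over $\mathbb{Q}_p$ has degree divisible by $p-2$, and the roots split into $p-2$ roots of valuation $1/(p-2)$ and $p-2$ roots of valuation $-1/(p-2)$. Hence if $p_n$ is reducible over $\mathbb{Q}$, it factors as $p_n=gh$ with $g,h\in\mathbb{Q}[x]$ of degree $p-2$. Each factor is irreducible over $\mathbb{Q}_p$, so the roots of $g$ are precisely the roots of one sign of valuation and those of $h$ the other.

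The main obstacle is excluding this balanced factorization, and reciprocity \eqref{2.5} is what I will use. Since $p_n(0)\ne0$, the roots of $p_n$ are closed under $\alpha\mapsto1/\alpha$, and this map negates valuations. So the roots of $h$ are the reciprocals of the roots of $g$, which gives $h=c\,g^*$ with $g^*(x)=x^{p-2}g(1/x)$ and some $c\in\mathbb{Q}^\times$.

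Now evaluate at $\pm1$:
\begin{itemize}
\item $g^*(1)=g(1)$, so $p_n(1)=c\,g(1)^2$;
\item $g^*(-1)=(-1)^{p-2}g(-1)=-g(-1)$ because $p$ is odd, so $p_n(-1)=-c\,g(-1)^2$.
\end{itemize}
By \eqref{2.6} both $p_n(1)=(p-1)2^{2p-4}$ and $p_n(-1)=2^{2p-4}$ are positive, so neither $g(1)$ nor $g(-1)$ vanishes. The first evaluation then forces $c>0$ and the second forces $c<0$, a contradiction. Therefore $p_n$ is irreducible.

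The case $p=3$ (so $n=1$, $p_1=3z^2+2z+3$) fits the same argument with segments of length $1$, and can also be checked directly from its negative discriminant. The delicate point I would double-check is the valuation computation for the middle coefficient, in particular that no other odd index has $2j+3$ divisible by $p$; this holds because $2j+3\le 2p-3<2p$.
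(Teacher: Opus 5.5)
Your proof is correct, and it takes a genuinely different route from the paper.

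The paper goes through the Cafure--Cesaretto criterion (Theorem~\ref{thm:6.3}). It passes to the image polynomial $f_{p-2}$, defined by $p_n(x)=x^nf_{p-2}(x+1/x)$, and derives its explicit coefficients in Lemma~\ref{lem:6.6}. That formula rests on the WZ-proved identities of Lemma~\ref{lem:6.5} and a computer-algebra evaluation of $S_3^{(m)}$. From it the paper shows that $f_{p-2}$ is Eisenstein at $p$. It then still needs the content computation of Lemma~\ref{lem:6.4}, via Lucas's congruence, to make $p_n$ primitive before it can use the non-square condition on $p_n(1)$.

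You instead read the $p$-adic Newton polygon of $p_n$ directly off the closed forms in Proposition~\ref{prop:2.3}. Your valuation checks are right: only $a_{p-2}^{(n)}=\frac{p-1}{4}\binom{2p-2}{p-2}$ is a $p$-unit, and $v_p(a_0^{(n)})=v_p(a_{2n}^{(n)})=1$. This forces any rational factorization to have the form $p_n=c\,g\,g^*$ with $\deg g=p-2$. You then rule that out by $p_n(1)p_n(-1)=-c^2g(1)^2g(-1)^2\le 0$, which uses only that $p-2$ is odd.

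The two arguments detect the same $p$-adic phenomenon. Eisenstein-ness of $f_{p-2}$ (in the reversed sense, with the leading coefficient exactly divisible by $p$) says its roots $\beta$ have valuation $-1/(p-2)$. Equivalently, the solutions $\alpha$ of $\alpha+\alpha^{-1}=\beta$ have valuations $\pm 1/(p-2)$, which is exactly your two-segment polygon. Your sign argument replaces the primitivity and non-square hypotheses, so for this theorem Lemma~\ref{lem:6.4} is not needed at all.

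Your route is shorter, self-contained and computer-free. The paper's route yields the explicit image polynomials $f_{2m+1}$ as a by-product. It also sits inside a framework (irreducibility of $f_n$ plus a square test) that could in principle extend to other values of $n$.
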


We begin by recalling that $p_n(x)$ (and also $q_n(x)$) are 
reciprocal polynomials with real coefficients, that is,
\[
x^{2n}p_n(\tfrac{1}{x})=p_n(x).
\]
This fact will play an important role in the proof of Theorem~\ref{thm:6.1}
since we can use an interesting recent irreducibility criterion.

In what follows, we use the terminology of Cafure and Cesaretto \cite{CC},
adapting their notation to be consistent with ours. It is clear that a 
reciprocal polynomial $p\in{\mathbb Q}[x]$ of degree $2n$ can be written as
\begin{equation}\label{6.2}
p(x)=a_0x^n+\sum_{k=1}^n a_k\left(x^{n+k}+x^{n-k}\right)
=x^n\left(a_0+\sum_{k=1}^n a_k\left(x^k+x^{-k}\right)\right).
\end{equation}
Since we can write each of the terms $x^k+x^{-k}$ as a polynomial in
$x+x^{-1}$, there is a unique polynomial $f\in{\mathbb Q}[x]$ satisfying the
equation
\begin{equation}\label{6.3}
p(x) = x^nf(x+\tfrac{1}{x}).
\end{equation}
Cafure and Cesaretto \cite{CC} call this polynomial $f$ the image polynomial
under the mapping ${\mathsf R}:p\mapsto f$ described above. We are going to use
two results from \cite{CC}, slightly adapted to our situation.

\begin{lemma}[\cite{CC}, p.~42]\label{lem:6.2}
Let $p\in{\mathbb Q}[x]$ be a reciprocal polynomial of degree $2n$. Then the
image polynomial $f\in{\mathbb Q}[x]$, defined by \eqref{6.3}, is given by
\begin{equation}\label{6.4}
f(x)=a_0+2\sum_{k=1}^n a_kT_k(\tfrac{x}{2}),
\end{equation}
where $a_0, a_1,\ldots, a_n$ are as in \eqref{6.2}, and $T_k(y)$ is the
Chebyshev polynomial defined in \eqref{3.12}.
\end{lemma}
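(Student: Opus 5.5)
The plan is to establish a single polynomial identity for the Chebyshev polynomials and substitute it term by term into \eqref{6.2}. The key identity is
\[
w^k+w^{-k}=2T_k\Bigl(\tfrac{1}{2}\bigl(w+w^{-1}\bigr)\Bigr)\qquad(k\ge 0),
\]
valid as an identity of Laurent polynomials (equivalently, rational functions) in an indeterminate $w$.

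To prove it, I would use the explicit formula \eqref{3.11} with $x=\tfrac12(w+w^{-1})$. Then
\[
x^2-1=\tfrac14\bigl(w-w^{-1}\bigr)^2,
\]
so one may take $\sqrt{x^2-1}=\tfrac12(w-w^{-1})$. With this choice $x+\sqrt{x^2-1}=w$ and $x-\sqrt{x^2-1}=w^{-1}$, and \eqref{3.11} gives $T_k(x)=\tfrac12(w^k+w^{-k})$ directly. The choice of square-root branch is harmless, because \eqref{3.11} is symmetric under $\sqrt{x^2-1}\mapsto-\sqrt{x^2-1}$. So the formula really expresses a polynomial in $x$, and substituting a rational function of $w$ is legitimate.

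If one prefers to avoid square roots altogether, there is an alternative check. Both sides satisfy the recurrence $s_{k+1}=(w+w^{-1})s_k-s_{k-1}$, which follows from \eqref{3.12} as $T_{k+1}=2xT_k-T_{k-1}$. The initial values also agree: $2T_0=2=w^0+w^0$ and $2T_1(\tfrac12(w+w^{-1}))=w+w^{-1}$. Induction on $k$ then gives the identity.

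Next, insert the identity into the second form of \eqref{6.2}:
\[
p(x)=x^n\Bigl(a_0+\sum_{k=1}^n a_k\,2T_k\bigl(\tfrac12(x+x^{-1})\bigr)\Bigr).
\]
This shows that the polynomial $f$ given in \eqref{6.4} satisfies $p(x)=x^nf(x+\tfrac1x)$. Since $T_k$ has rational (indeed integer) coefficients, $f\in\mathbb{Q}[x]$.

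Finally, I would verify that this $f$ is the unique polynomial referred to in \eqref{6.3}. Suppose $g\in\mathbb{Q}[x]$ also satisfies $x^n g(x+x^{-1})=p(x)$. Then $(f-g)(x+x^{-1})=0$ as a rational function of $x$. Since $x+x^{-1}$ is nonconstant, and hence transcendental over $\mathbb{Q}$, this forces $f=g$.

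The only step needing any care is the justification of the substitution into \eqref{3.11}, and the recurrence argument sidesteps it entirely. Everything else is bookkeeping, so I expect no real obstacle.
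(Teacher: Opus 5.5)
Your proof is correct and matches the paper's approach: the paper gives no proof of its own and simply cites \cite{CC}, noting that each $x^k+x^{-k}$ is a polynomial in $x+x^{-1}$. Your identity $w^k+w^{-k}=2T_k(\tfrac12(w+w^{-1}))$, proved either from \eqref{3.11} or by the recurrence from \eqref{3.12}, makes that remark explicit and yields \eqref{6.4} directly, and your transcendence argument correctly supplies the uniqueness claimed in \eqref{6.3}.
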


Before stating the next result, we recall that the {\it content\/} of a 
polynomial $p\in{\mathbb Z}[x]$, denoted by ${\rm cont}(p)$, is the greatest
common divisor of the coefficients of $p$. If ${\rm cont}(p)=1$, then $p$ is
called {\it primitive\/}.

\begin{theorem}[\cite{CC}, Theorem~11(1)]\label{thm:6.3}
Let $p\in{\mathbb Z}[x]$ be a primitive reciprocal polynomial of even degree.
If the image polynomial $f(x)$, defined by \eqref{6.3}, is irreducible and if
$|p(1)|$ or $|p(-1)|$ are not perfect squares, then $p$ is irreducible over
$\mathbb Q$.
\end{theorem}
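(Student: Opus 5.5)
Suppose $p$ is reducible over $\mathbb Q$; we will show that then both $|p(1)|$ and $|p(-1)|$ are perfect squares. The plan is to look at the zeros of an irreducible factor of $p$, show that this factor has degree exactly $n$, and conclude that $p$ is, up to sign, a factor $g$ times its reciprocal $g^*$. Evaluating at $\pm1$ then produces the two squares.

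Write $\deg p=2n$, so $\deg f=n$ by \eqref{6.3}. Let $g\in\mathbb Z[x]$ be a primitive irreducible factor of $p$, and let $\alpha$ be a zero of $g$. Note that $\alpha\neq 0$, since $p(0)$ equals the leading coefficient of $p$. Put $\beta=\alpha+1/\alpha$. By \eqref{6.3}, $f(\beta)=0$, and since $f$ is irreducible, $[\mathbb Q(\beta):\mathbb Q]=n$. Because $\alpha^2-\beta\alpha+1=0$, we have $[\mathbb Q(\alpha):\mathbb Q(\beta)]\le 2$. Hence $\deg g=[\mathbb Q(\alpha):\mathbb Q]$ equals either $2n$ or $n$.

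If $\deg g=2n$, then $p=c\,g$ with $c$ a constant, and primitivity forces $c=\pm1$, so $p$ is irreducible. It remains to rule out $\deg g=n$; in that case $p$ is a product of two irreducible factors $g,h$ of degree $n$.

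The key step is to show that $h$ is the reciprocal of $g$.
\begin{itemize}
\item The map $\alpha'\mapsto\alpha'+1/\alpha'$ sends the conjugates of $\alpha$ onto the conjugates of $\beta$, since any embedding of $\mathbb Q(\alpha)$ restricts to one of $\mathbb Q(\beta)$ and every embedding of $\mathbb Q(\beta)$ extends.
\item Both sets have $n$ elements, so this map is a bijection.
\item Since $\alpha'$ and $1/\alpha'$ have the same image, $1/\alpha$ is therefore not a zero of $g$, unless $\alpha=1/\alpha$, i.e. $\alpha=\pm1$.
\end{itemize}

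The exceptional case $\alpha=\pm1$ needs a separate check. Then $f(\pm2)=0$, so irreducibility forces $n=1$ and $f=x\mp2$. This gives $p=c(x\mp1)^2$ with $c=\pm1$, and then $|p(\pm1)|\in\{0,4\}$ are both squares, contradicting the hypothesis.

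Otherwise, consider $g^*(x):=x^ng(1/x)\in\mathbb Z[x]$. It is irreducible and primitive, and its zeros are the reciprocals of the zeros of $g$. Since $p$ is reciprocal, $1/\alpha$ is a zero of $p$ but not of $g$, so it is a zero of $h$. Both $h$ and $g^*$ are irreducible, so $h=\pm g^*$. By Gauss's lemma and primitivity of $p$, we get $p=\varepsilon\, g\,g^*$ with $\varepsilon=\pm1$.

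Finally we evaluate at $\pm1$, using $g^*(1)=g(1)$ and $g^*(-1)=(-1)^ng(-1)$:
\[
|p(1)|=g(1)^2,\qquad |p(-1)|=g(-1)^2 .
\]
Both are perfect squares, contradicting the hypothesis. Hence $p$ is irreducible.

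I expect the main obstacle to be the bijection argument showing that $g$ contains no reciprocal pair of zeros, together with the edge case of zeros at $\pm1$. Once that is settled, the rest is bookkeeping with contents via Gauss's lemma.
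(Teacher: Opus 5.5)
The paper does not prove this statement. It is quoted from Cafure and Cesaretto and used as a black box in the proof of Theorem~\ref{thm:6.1}, so there is no in-paper argument to compare yours against.

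Judged on its own, your proof is correct and complete. It runs as follows:
\begin{itemize}
\item the degree count $[\mathbb Q(\alpha):\mathbb Q]\in\{n,2n\}$;
\item the bijection between the zeros of a degree-$n$ factor $g$ and the zeros of $f$, which rules out reciprocal pairs inside $g$;
\item the separate treatment of $\alpha=\pm1$;
\item the resulting dichotomy ``$p$ irreducible or $p=\pm g\,g^*$'', followed by evaluation at $\pm1$.
\end{itemize}
This is the natural route to the criterion. It has the side benefit of making the paper's irreducibility argument self-contained rather than dependent on the citation.

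Only some cosmetic tightening is needed:
\begin{itemize}
\item In the exceptional case, irreducibility gives $f=c(x\mp 2)$ rather than $f=x\mp 2$. You do restore the constant in the next line.
\item Your claims that the cofactor $h=p/g$ is irreducible and equals $\pm g^*$ rest on two facts you should state explicitly at that point. First, the degree dichotomy applies to every irreducible factor of $p$, so the degree-$n$ cofactor $h$ cannot split further. Second, $h$ is primitive by Gauss's lemma, since $p$ and $g$ are, so the rational scalar relating $h$ and $g^*$ must be $\pm1$.
\end{itemize}
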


In order to apply Theorem~\ref{thm:6.3}, we first need a result on the
content of the polynomials $p_n(x)$. 

\begin{lemma}\label{lem:6.4}
Let $n\geq 1$ be an odd integer, and write $n+1=2^km$, where $k\geq 1$ and $m$ 
is odd. Then ${\rm cont}(p_n)=2^{k-1}$.
\end{lemma}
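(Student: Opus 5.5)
The plan is to pin down the content by combining two facts: an upper bound coming from an evaluation at $z=-1$, and a lower bound coming from the explicit coefficient formulas of Proposition~\ref{prop:2.3}.

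\emph{Only $2$ can divide the content.} By \eqref{2.6} we have $p_n(-1)=2^{2n}$. This value is an integer combination (the alternating sum) of the coefficients $a_k^{(n)}$, so ${\rm cont}(p_n)$ divides $4^n$. Hence ${\rm cont}(p_n)$ is a power of $2$, and it remains to show that the minimum of the $2$-adic valuations $v_2(a_k^{(n)})$ equals $k-1$.

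\emph{Upper bound on the $2$-adic valuation.} By Proposition~\ref{prop:2.3}, $a_0^{(n)}=(n+1)(n+2)/2$. Since $n$ is odd, $n+2$ is odd, so
\[
v_2\bigl(a_0^{(n)}\bigr)=v_2(n+1)-1=k-1.
\]
Therefore ${\rm cont}(p_n)$ divides $2^{k-1}$.

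\emph{Lower bound on the $2$-adic valuation.} We need $2^{k-1}\mid a_k^{(n)}$ for every $k$. We use the identity
\[
\binom{2n+2}{2j+1}=\frac{2n+2}{2j+1}\binom{2n+1}{2j}.
\]
Since $2j+1$ is odd, this gives $v_2\binom{2n+2}{2j+1}\ge v_2(2n+2)=k+1$.

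For even indices, \eqref{2.13} together with the oddness of $n+2$ gives
\[
v_2\bigl(a_{2j}^{(n)}\bigr)\ge (k+1)-2=k-1.
\]
For odd indices, \eqref{2.14} has denominator $2(2j+3)$, where $2j+3$ is odd. The coefficient is known to be an integer, so its $2$-adic valuation is computed from the factorization and satisfies
\[
v_2\bigl(a_{2j+1}^{(n)}\bigr)\ge v_2(n-j)+(k+1)-1\ge k.
\]
Thus $2^{k-1}$ divides every coefficient.

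Combining the three steps gives ${\rm cont}(p_n)=2^{k-1}$. No step is a serious obstacle. The only point requiring care is treating the rational expressions \eqref{2.13} and \eqref{2.14} correctly. Because all the $a_k^{(n)}$ are integers and the only non-$2$ factors in the denominators are odd, the $2$-adic valuations can be read off directly from the factored forms.
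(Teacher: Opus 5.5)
Your proof is correct. The $2$-adic part is essentially the paper's: the paper also rewrites $a_{2j}^{(n)}=\frac{(n+1)(n+2)}{2(2j+1)}\binom{2n+1}{2j}$, appeals to \eqref{2.14} for the odd-index coefficients, and uses $a_0^{(n)}=(n+1)(n+2)/2$ to show that $2^{k-1}$ is exact.

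Where you differ is in excluding odd primes. The paper takes each odd prime $p$ with $p^\alpha\,\|\,n+1$ and chooses $j$ with $2j+1=p^\alpha$. It then uses Lucas's congruence to show $p\nmid\binom{2n+1}{2j}$, hence $p\nmid a_{2j}^{(n)}$. Your observation that ${\rm cont}(p_n)$ divides the alternating coefficient sum $p_n(-1)=2^{2n}$ replaces all of this with one line.

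Your route is also more complete. An odd prime dividing the content must divide $a_0^{(n)}=(n+1)(n+2)/2$, so it divides $n+1$ \emph{or} $n+2$. The paper's Lucas argument only treats the first case. The second case genuinely needs an argument: for $n=5$, the prime $7=n+2$ divides every coefficient of $p_5$ except $a_5^{(5)}=1188$. Your evaluation at $z=-1$ disposes of all odd primes at once. The only thing the paper's route yields beyond yours is an explicit coefficient prime to each odd $p\mid n+1$, which the lemma does not need.

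One cosmetic point: you use $k$ both for the exponent in $n+1=2^km$ and as a running coefficient index (``$2^{k-1}\mid a_k^{(n)}$ for every $k$''). Rename the index.
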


As an important tool for the proof of this lemma, we use a well-known 
congruence of E.~Lucas (1878) in two different forms; see, e.g., \cite{Gr}.
Let $p$ be a prime and $n,m\in{\mathbb N}$, written in base $p$ as
\begin{align*}
n &= n_dp^d+n_{d-1}p^{d-1}+\dots+n_1p+n_0,\quad 0\leq n_j<p,\\
m &= m_dp^d+m_{d-1}p^{d-1}+\dots+m_1p+m_0,\quad 0\leq m_j<p.
\end{align*}
Then
\begin{equation}\label{6.5}
\binom{n}{m}\equiv \binom{\lfloor n/p\rfloor}{\lfloor m/p\rfloor}
\binom{n_0}{m_0}\pmod{p}
\end{equation}
and 
\begin{equation}\label{6.6}
\binom{n}{m}\equiv\binom{n_d}{m_d}\binom{n_{d-1}}{m_{d-1}}\dots
\binom{n_0}{m_0}\pmod{p}
\end{equation}
with the usual convention that $\binom{r}{s}=0$ when $r<s$.

\begin{proof}[Proof of Lemma~\ref{lem:6.4}]
Using \eqref{2.13} and an elementary identity for binomial coefficients, we
see that
\begin{equation}\label{6.6a}
a_{2j}^{(n)}=\frac{n+2}{4}\binom{2n+2}{2j+1}
=\frac{(n+2)(n+1)}{2(2j+1)}\binom{2n+1}{2j}\qquad(0\leq j\leq n)
\end{equation}
are integers divisible by $2^{k-1}$ since $n+1=2^km$, with $m$ odd.
For the same reason, by \eqref{2.14} we also have $2^{k-1}|a_{2j+1}^{(n)}$,
$0\leq j\leq n-1$.

On the other hand, by \eqref{2.13} we have $a_0^{(n)}=(n+1)(n+2)/2$, so 
$2^{k-1}$ divides $a_0^{(n)}$ exactly, which means that 
$2^{k-1}\|{\rm cont}(p_n)$.

Next, suppose that an odd prime $p$ is such that $p^\alpha\|m$, and thus
$p^\alpha\|n+1$ for some $\alpha\geq 1$. Now let $j$ be such that 
$2j+1=p^\alpha$; we are going to show that
\begin{equation}\label{6.7}
p\nmid a_{2j}^{(n)},\quad\hbox{and thus}\quad {\rm cont}(p_n)=2^{k-1}.
\end{equation}
To prove \eqref{6.7}, we use \eqref{6.6a} and first note that the powers of $p$
in $n+1$ and $2j+1$ cancel, while $p\nmid n+2$. Therefore it remains to show
that $p\nmid\binom{2n+1}{2j}$.

Since by our assumptions on $n$ and $j$ we have $2n+1\equiv 2j\equiv-1\pmod{p}$,
we can apply \eqref{6.5} with $n_0=m_0=p-1$. On the other
hand, we have
\[
2n+1-(p-1) = 2(n+1)-p = 2Bp^\alpha-p
\]
for some $B\in{\mathbb N}$, so that
\begin{equation}\label{6.8}
\left\lfloor\frac{2n+1}{p}\right\rfloor = 2Bp^{\alpha-1}-1,
\end{equation}
and $2j-(p-1)=2j+1-p=p^\alpha-p$, so that
\begin{equation}\label{6.9}
\left\lfloor\frac{2j}{p}\right\rfloor = p^{\alpha-1}-1.
\end{equation}
But we can also write
\[
2Bp^{\alpha-1}-1 = (2B-1)p^{\alpha-1}+(p^{\alpha-1}-1),
\]
so that by \eqref{6.6},
\[
\binom{2Bp^{\alpha-1}-1}{p^{\alpha-1}-1} \equiv
\binom{2B-1}{0}\binom{p^{\alpha-1}-1}{p^{\alpha-1}-1}\equiv 1\cdot 1\pmod{p}.
\]
Altogether, we therefore have with \eqref{6.5}, \eqref{6.8} and \eqref{6.9}
that $p\nmid\binom{2n+1}{2j}$, which proves \eqref{6.7} and thus the lemma.
\end{proof}

\noindent
{\bf Remark.} Although this is not needed here, we can also show that, with 
the notation of Lemma~\ref{lem:6.4}, cont$(p_{n-1})=2^{k-1}$ when $n\geq 1$ is
odd, and cont$(q_n)=2^k$ for all $n\geq 0$. The proofs are similar to that of
Lemma~\ref{lem:6.4}.

\medskip
In view of Lemma~\ref{lem:6.2} and Theorem~\ref{thm:6.3}, we let $p(x)=p_n(x)$ 
and denote the corresponding image polynomial by $f(x)=f_n(x)$. These last
polynomials have the following explicit expansions for odd $n$.

\begin{lemma}\label{lem:6.6}
For any integer $m\geq 0$, we have
\begin{align}
f_{2m+1}(x)&=\sum_{j=0}^m 2^{2m-2j+1}(m+1-j)\binom{2m+3}{2j}x^{2j}\label{6.19}\\
&+\sum_{j=1}^{m+1} 2^{2m-2j+2}\frac{j(2j+1)}{2m+3-2j}\binom{2m+3}{2j+1}x^{2j-1}.\nonumber
\end{align}
\end{lemma}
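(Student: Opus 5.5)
The plan is to compute $f_n$ in closed form directly from the explicit formula \eqref{2.15}, for arbitrary $n$, and only then specialize to $n=2m+1$ and read off the coefficients.

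\textbf{Step 1: rewrite \eqref{2.15} in the variable $w=z+z^{-1}$.} By the uniqueness of the image polynomial in \eqref{6.3}, it suffices to find a polynomial $f$ with $p_n(z)=z^nf(z+\tfrac1z)$. Set $w=z+\tfrac1z$ and observe that
\[
\frac{(z+1)^2}{z}=w+2,\qquad \frac{(z-1)^2}{z}=w-2 .
\]
Divide \eqref{2.15} by $z^n$, writing the denominator $16z^{2}\cdot z^n$ as $16z^{n+2}$. This gives
\[
\frac{(z\mp1)^{2n+4}}{z^{n+2}}=(w\mp2)^{n+2},\qquad \frac{z(z+1)^{2n+2}}{z^{n+2}}=(w+2)^{n+1}.
\]
Hence, for all $n\ge0$,
\[
f_n(w)=\frac{1}{16}\left((w-2)^{n+2}-(w+2)^{n+2}+4(n+2)(w+2)^{n+1}\right).
\]

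\textbf{Step 2: expand binomially with $N=n+2=2m+3$ odd.} The difference $(w-2)^N-(w+2)^N$ contributes only to powers $w^k$ with $N-k$ odd, that is, to even $k$. Its coefficient there is $-2\cdot 2^{N-k}\binom{N}{k}$. The last term contributes $4N\,2^{N-1-k}\binom{N-1}{k}$ to $w^k$ for every $k$.

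\emph{Even powers $k=2j$.} Using $N\binom{N-1}{k}=(N-k)\binom Nk$, the combined coefficient is
\[
\frac{2^{N-k+1}}{16}\bigl((N-k)-1\bigr)\binom Nk .
\]
With $N-k=2m+3-2j$ this becomes $2^{2m-2j+1}(m+1-j)\binom{2m+3}{2j}$, which is the first sum in \eqref{6.19}.

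\emph{Odd powers $k=2j-1$.} Only the last term contributes, giving
\[
\frac{N\,2^{N-k+1}}{16}\binom{N-1}{2j-1}.
\]
Since $N\binom{N-1}{2j-1}=2j\binom N{2j}$, this equals $2^{2m-2j+2}\,j\binom{2m+3}{2j}$.

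\textbf{Step 3: match the paper's form of the odd coefficients.} This requires the binomial identity
\[
\frac{2j+1}{2m+3-2j}\binom{2m+3}{2j+1}=\binom{2m+3}{2j},
\]
which is immediate from the factorial formula. It converts $2^{2m-2j+2}\,j\binom{2m+3}{2j}$ into the second sum of \eqref{6.19}. The index range $1\le j\le m+1$ corresponds exactly to odd powers $w^1,\dots,w^{2m+1}$, and the top power $w^{2m+2}$ is the case $j=m+1$ of the first sum.

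The only mildly delicate point is the bookkeeping in Step 2: keeping the powers of $2$ and the parity restriction straight. The derivation itself is a direct substitution, so I do not expect a genuine obstacle.
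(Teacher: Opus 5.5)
Your proof is correct, and it takes a genuinely different and much shorter route than the paper.

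\textbf{The paper's route.} The paper computes $f_{2m+1}$ through Lemma~\ref{lem:6.2}, as $a_0+2\sum a_kT_k(x/2)$. It inserts the coefficient formulas \eqref{2.13}--\eqref{2.14} and the explicit expansion \eqref{6.20} of $T_k$, then interchanges summations. This leaves three alternating binomial sums, which it evaluates by the WZ method (Lemma~\ref{lem:6.5}) and by computer algebra (\eqref{6.28}).

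\textbf{Your route.} You observe that $(z\pm1)^2/z=w\pm2$ turns the closed form \eqref{2.15} directly into
\[
f_n(w)=\tfrac{1}{16}\bigl((w-2)^{n+2}-(w+2)^{n+2}+4(n+2)(w+2)^{n+1}\bigr).
\]
The coefficients then follow from a single binomial expansion. I checked the parity bookkeeping, the powers of $2$, and the two identities $N\binom{N-1}{k}=(N-k)\binom{N}{k}$ and $\frac{2j+1}{2m+3-2j}\binom{2m+3}{2j+1}=\binom{2m+3}{2j}$; all are right. The formula also reproduces Table~3.

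\textbf{What each approach buys.}
\begin{itemize}
\item Your argument is entirely computer-free.
\item It gives a closed form for $f_n$ for every $n\ge 0$, not just odd $n$.
\item Equating your coefficients with the paper's intermediate expressions \eqref{6.22}, \eqref{6.26}, \eqref{6.27} gives human proofs of the WZ identities \eqref{6.10}, \eqref{6.11} and of the computer-algebra evaluation \eqref{6.28}.
\item The paper's method is the generic one: it needs only the coefficients of a reciprocal polynomial, not a closed form. Here a closed form is available, so nothing is lost by bypassing it.
\end{itemize}

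\textbf{One small wording fix.} The first sum in \eqref{6.19} stops at $j=m$. The correct point is that your even-power coefficient $2^{2m-2j+1}(m+1-j)\binom{2m+3}{2j}$ vanishes at $j=m+1$. Hence the coefficient of $w^{2m+2}$ is $0$, and $w^{2m+3}$ does not occur at all. It is not accurate to say that $w^{2m+2}$ ``is the case $j=m+1$ of the first sum.''
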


\medskip
For the proof of this lemma we require the following binomial identities.

\begin{lemma}\label{lem:6.5}
For integers $1\leq j\leq m$ we have
\begin{align}
\sum_{\ell=j}^m(-1)^{\ell-j}&\frac{(m-\ell+1)\ell}{(2m+2\ell+3)(\ell+j)}
\binom{4m+4}{2m+2\ell+1}\binom{\ell+j}{\ell-j}\label{6.10} \\
&\qquad\qquad=2^{2m-2j}\frac{m+1-j}{j}\binom{2m+2}{2j-1},\nonumber \\
\sum_{\ell=j}^{m+1}(-1)^{\ell-j}&\frac{2\ell-1}{\ell+j-1}
\binom{4m+4}{2m+2\ell+1}\binom{\ell+j-1}{\ell-j} \label{6.11}\\
&\qquad\qquad=2^{2m-2j+4}\frac{j}{2m+3-2j}\binom{2m+2}{2j}.\nonumber
\end{align}
\end{lemma}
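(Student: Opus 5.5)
The plan is to avoid summing the alternating sums in Lemma~\ref{lem:6.5} directly. Instead, I would compute the image polynomial $f_n$ of $p_n$ in two ways and compare coefficients. The identities \eqref{6.10} and \eqref{6.11} should then appear as the equality of the even- and odd-degree coefficients of $f_{2m+1}$.

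\emph{First computation (closed form).} Set $w=z+z^{-1}$, so that $(z+1)^2/z=w+2$ and $(z-1)^2/z=w-2$. Dividing \eqref{2.10} by $z^n$ gives
\[
z^{-n}p_n(z)=\frac{(w-2)^{n+2}-\bigl(w-(4n+6)\bigr)(w+2)^{n+1}}{16}.
\]
Here I use $(z-1)^{2n+4}/z^{n+2}=(w-2)^{n+2}$, $(z+1)^{2n+2}/z^{n+1}=(w+2)^{n+1}$, and $(z^2-(4n+6)z+1)/z=w-(4n+6)$. By the uniqueness in \eqref{6.3}, this means
\[
f_n(x)=\tfrac{1}{16}\bigl((x-2)^{n+2}-(x-4n-6)(x+2)^{n+1}\bigr).
\]
I then put $n=2m+1$ and expand binomially. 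The $x^{2m+3}$ terms cancel. The coefficient of $x^{2j}$ is $\frac1{16}\bigl(2^{2m+3-2j}\binom{2m+3}{2j}+(8m+10)2^{2m+2-2j}\binom{2m+2}{2j}-2^{2m+3-2j}\binom{2m+2}{2j-1}\bigr)$, and the odd coefficients are similar. Routine binomial manipulation, using $\binom{2m+2}{2j}=\frac{2m+3-2j}{2m+3}\binom{2m+3}{2j}$ and similar relations, should reduce these to the coefficients displayed in \eqref{6.19}. This proves Lemma~\ref{lem:6.6} directly, without using Lemma~\ref{lem:6.5}.

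\emph{Second computation (via Lemma~\ref{lem:6.2}).} I would use the explicit expansion
\[
2T_N(\tfrac{x}{2})=\sum_{i}(-1)^i\frac{N}{N-i}\binom{N-i}{i}x^{N-2i}.
\]
For $N=2\ell$ and $i=\ell-j$ the coefficient of $x^{2j}$ is $(-1)^{\ell-j}\frac{2\ell}{\ell+j}\binom{\ell+j}{\ell-j}$. For $N=2\ell-1$ and $i=\ell-j$ the coefficient of $x^{2j-1}$ is $(-1)^{\ell-j}\frac{2\ell-1}{\ell+j-1}\binom{\ell+j-1}{\ell-j}$. These are exactly the kernels in \eqref{6.10} and \eqref{6.11}.

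The coefficients $a_k$ of \eqref{6.2} are $a_k=a^{(n)}_{n+k}$ with $n=2m+1$. For $k=2\ell$ the index $n+k=2(m+\ell)+1$ is odd, so \eqref{2.14} gives $a^{(n)}_{n+2\ell}=\frac{(2m+3)(m-\ell+1)}{2(2m+2\ell+3)}\binom{4m+4}{2m+2\ell+1}$. For $k=2\ell-1$ the index is even, so \eqref{2.13} gives $a^{(n)}_{n+2\ell-1}=\frac{2m+3}{4}\binom{4m+4}{2m+2\ell+1}$. Substituting into \eqref{6.4}, the coefficient of $x^{2j}$ in $f_{2m+1}$ becomes, up to the explicit factor $2(2m+3)$, the left side of \eqref{6.10}. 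The coefficient of $x^{2j-1}$ becomes, up to the factor $\frac{2m+3}{2}$, the left side of \eqref{6.11}.

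\emph{Comparison.} Equating these with the closed-form coefficients from the first step and dividing out the constants gives \eqref{6.10} and \eqref{6.11}, after rewriting $\binom{2m+3}{2j}$ and $\binom{2m+3}{2j+1}$ as multiples of $\binom{2m+2}{2j-1}$ and $\binom{2m+2}{2j}$. The range $1\le j\le m$ matters because for $j=0$ in the even case the constant term $a_0$ enters separately.

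I expect the main obstacle to be pure bookkeeping: getting the index shifts right (notably $x^{2j-1}$ versus $x^{2j+1}$ in \eqref{6.19}), the factor $2$ in \eqref{6.4}, and the prefactors so that the final normalizations match exactly. I would check the cases $m=1$, $j=1$ numerically against Table~1 before simplifying. As a fallback, the identities are hypergeometric in $\ell$, so Zeilberger's algorithm would certify a recurrence in $m$ that the right-hand sides also satisfy.
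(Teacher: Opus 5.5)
Your route is correct and genuinely different from the paper's. The paper's proof is only an outline: the right-hand sides were guessed numerically, and both identities were then certified by the WZ method in Maple or Mathematica.

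You instead apply \eqref{2.10} with $w=z+z^{-1}$ to obtain the closed form $f_n(x)=\frac{1}{16}\bigl((x-2)^{n+2}-(x-4n-6)(x+2)^{n+1}\bigr)$. It does reproduce $3x+2$ and $10x^3+20x^2+40x+16$ from Table~3. You then read off \eqref{6.10} and \eqref{6.11} by comparing coefficients with the Chebyshev expansion of \eqref{6.4}, which is exactly the computation \eqref{6.21}--\eqref{6.27} in the paper.

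What your route buys:
\begin{itemize}
\item a short, computer-free proof that explains where the identities come from;
\item \eqref{6.28} for free, by comparing constant terms;
\item \eqref{6.11} also for $j=m+1$, which \eqref{6.27} actually needs;
\item a direct proof of Lemma~\ref{lem:6.6}, so Lemma~\ref{lem:6.5} is not needed for Theorem~\ref{thm:6.7} at all.
\end{itemize}
What the WZ route buys in return is independence: it certifies the identities as standalone hypergeometric identities, without relying on \eqref{2.10}, \eqref{2.13}, \eqref{2.14} or Lemma~\ref{lem:6.2}.

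Two bookkeeping slips must be fixed, because as written they give wrong constants.

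First, the coefficient of $x^{2j}$ in $(x-2)^{2m+3}$ is $-2^{2m+3-2j}\binom{2m+3}{2j}$, since the exponent $2m+3-2j$ is odd. Your first term therefore has the wrong sign; for $m=j=1$ your formula gives $30$ instead of $20$. With the correct sign, Pascal's rule and absorption reduce the coefficient of $x^{2j}$ in $f_{2m+1}$ to $2^{2m+1-2j}(m+1-j)\binom{2m+3}{2j}$. The coefficient of $x^{2j-1}$ is $2^{2m+1-2j}(2m+3)\binom{2m+2}{2j-1}$.

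Second, you counted the factor $2$ in \eqref{6.4} twice. Your kernels are already coefficients of $2T_N(x/2)$. Hence the coefficient of $x^{2j}$ is $(2m+3)$ times the left side of \eqref{6.10}, not $2(2m+3)$ times. Likewise, the coefficient of $x^{2j-1}$ is $\frac{2m+3}{4}$ times the left side of \eqref{6.11}, not $\frac{2m+3}{2}$ times.

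With both corrections, the relations $\binom{2m+3}{2j}=\frac{2m+3}{2j}\binom{2m+2}{2j-1}$ and $\binom{2m+2}{2j}=\frac{2m+3-2j}{2j}\binom{2m+2}{2j-1}$ give exactly the stated right-hand sides.
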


\begin{proof}(Outline).
We first conjectured the right-hand sides of both identities by numerical
experimentations with some fixed small values of $j\geq 1$. Then we used the
WZ (Wilf-Zeilberger) method, as explained, e.g., in \cite{PWZ}, to prove these 
identities. This can be done with Maple, using the function {\tt WZMethod}
within the package {\tt SumTools[Hypergeometric]}, or with Mathematica, using
the function {\tt Zb} in the package {\tt fastZeil}. We leave the details to the interested reader.
\end{proof}

\begin{proof}[Proof of Lemma~\ref{lem:6.6}]
In view of Lemma~\ref{lem:6.2}, we use the following well-known explicit 
formula for the Chebyshev polynomials $T_k(x)$, namely
\begin{equation}\label{6.20}
2T_k(\tfrac{x}{2}) = k\sum_{j=0}^{\lfloor k/2\rfloor}\frac{(-1)^j}{k-j}
\binom{k-j}{j}x^{k-2j};
\end{equation}
see, e.g., \cite[eq.~(1.96)]{Ri}. Now, by \eqref{6.4} and with the notation of
\eqref{2.12}, we have with $n=2m+1$, 
\begin{equation}\label{6.21}
f_{2m+1}(x)=a_{2m+1}^{(2m+1)}
+2\sum_{\ell=1}^m a_{2m+2\ell+1}^{(2m+1)}T_{2\ell}(\tfrac{x}{2})
+2\sum_{\ell=1}^{m+1}a_{2m+2\ell}^{(2m+1)}T_{2\ell-1}(\tfrac{x}{2}).
\end{equation}
With \eqref{2.13}, \eqref{2.14} and \eqref{6.20} we then get
\begin{equation}\label{6.22}
f_{2m+1}(x)=\frac{m+1}{2}\binom{4m+4}{2m+1}+S_1^{(m)}(x)+S_2^{(m)}(x),
\end{equation} 
where
\begin{align}
S_1^{(m)}(x)=\sum_{\ell=1}^m&\frac{(2m+3)(m-\ell+1)}{2m+2\ell+3}
\binom{4m+4}{2m+2\ell+1}\ell \label{6.23}\\
&\qquad\qquad\times\sum_{j=0}^{\ell}\frac{(-1)^j}{2\ell-j}
\binom{2\ell-j}{j}x^{2\ell-2j},\nonumber\\
S_2^{(m)}(x)=\sum_{\ell=1}^{m+1}&\frac{2m+3}{4}
\binom{4m+4}{2m+2\ell+1}(2\ell-1) \label{6.24}\\
&\qquad\qquad\times\sum_{j=0}^{\ell-1}\frac{(-1)^j}{2\ell-1-j}
\binom{2\ell-1-j}{j}x^{2\ell-1-2j}.\nonumber
\end{align}
To simplify \eqref{6.22}, we first rewrite the inner sums in \eqref{6.23} and
\eqref{6.24} respectively as
\begin{equation}\label{6.25}
\sum_{j=0}^{\ell}\frac{(-1)^{\ell-j}}{\ell+j}\binom{\ell+j}{\ell-j}x^{2j},\qquad
\sum_{j=1}^{\ell}\frac{(-1)^{\ell-j}}{\ell+j-1}\binom{\ell+j-1}{\ell-j}x^{2j-1}.
\end{equation}
Separating out the constant coefficient in \eqref{6.23} and changing the order
of summation in the remaining double sum, we get
\begin{align}
&S_1^{(m)}=\sum_{\ell=1}^m(-1)^\ell\frac{(2m+3)(m-\ell+1)}{2m+2\ell+3}
\binom{4m+4}{2m+2\ell+1}\label{6.26}\\
&+(2m+3)\sum_{j=1}^m\left(
\sum_{\ell=j}^m\frac{(-1)^{\ell-j}(m-\ell+1)\ell}{(2m+2\ell+3)(\ell+j)}
\binom{4m+4}{2m+2\ell+1}\binom{\ell+j}{\ell-j}\right)x^{2j}.\nonumber
\end{align}
Similarly, using the second sum in \eqref{6.25} and changing the order of
summation in \eqref{6.24}, we get
\begin{equation}\label{6.27}
S_2^{(m)}=\frac{2m+3}{4}\sum_{j=1}^{m+1}\left(
\sum_{\ell=j}^{m+1}\frac{(-1)^{\ell-j}(2\ell-1)}{\ell+j-1}
\binom{4m+4}{2m+2\ell+1}\binom{\ell+j-1}{\ell-j}\right)x^{2j-1}.
\end{equation}
The first sum in \eqref{6.26}, which we denote by $S_3^{(m)}$, can be evaluated 
symbolically with a computer algebra system, and after some simplification 
we can write it as
\begin{equation}\label{6.28}
S_3^{(m)}=(m+1)2^{2m+1}-\frac{m+1}{2}\binom{4m+4}{2m+1}.
\end{equation}
We now substitute \eqref{6.28} and \eqref{6.10} into \eqref{6.26}, as well as
\eqref{6.11} into \eqref{6.27}. Then after some simplifications, \eqref{6.22}
yields
\begin{align*}
f_{2m+1}(x)&=(m+1)2^{2m+1}
+\sum_{j=1}^m 2^{2m-2j+1}(m+1-j)\binom{2m+3}{2j}x^{2j}\\
&+\sum_{j=1}^{m+1} 2^{2m-2j+2}\frac{j(2j+1)}{2m+3-2j}\binom{2m+3}{2j+1}x^{2j-1}.
\end{align*}
Finally, we note that the term $(m+1)2^{2m+1}$ is just the case $j=0$ in the
sum following this term. This proves the desired identity \eqref{6.19}.
\end{proof}

The first few polynomials $f_{2m+1}(x)$ are shown in Table~3. By evaluating
\[
x^{2m+1}f_{2m+1}(x+\tfrac{1}{x}),\quad m=1, 2, 3,
\]
we obtain $p_1(x), p_3(x)$, and $p_5(x)$, respectively, as shown in Table~1.
This illustrates the key equation \eqref{6.3} numerically.

We can see from the coefficients in Table~3 that $f_n(x)$, for 
$n=1, 3, 5$, and 9 are $p$-Eisenstein polynomials with $p=3, 5, 7$, and 11,
respectively. This is true in general:

\begin{theorem}\label{thm:6.7}
For any odd prime $p$, the polynomial $f_{p-2}(x)$ is $p$-Eisenstein, and is
therefore irreducible over $\mathbb Q$.
\end{theorem}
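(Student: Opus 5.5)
With $p$ an odd prime, set $n=p-2=2m+1$, so that $p=2m+3$. Lemma~\ref{lem:6.6} then gives every coefficient of $f_{2m+1}(x)$ explicitly, and the proof will be a direct check of Eisenstein's criterion for the prime $p=2m+3$ against these formulas. First I will record the degree and leading coefficient. The highest power in \eqref{6.19} is $x^{2m+1}$, coming from the odd sum with $j=m+1$. Its coefficient is
\[
2^{0}\frac{(m+1)(2m+3)}{1}\binom{2m+3}{2m+3}=(m+1)(2m+3)=(m+1)p,
\]
which is divisible by $p$. This is not what Eisenstein needs at the top, so I will apply the criterion to the reversed polynomial. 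Equivalently, I will use the version of the criterion where $p$ fails to divide the constant term, divides every other coefficient, and $p^2$ fails to divide the leading coefficient. Reversal preserves irreducibility as long as the constant term is nonzero.

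The next step is to verify the three divisibility conditions in this reversed form.

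\emph{Constant term.} The constant term is the $j=0$ even term, $2^{2m+1}(m+1)$. Since $p=2m+3$ is odd and $m+1<p$, this is not divisible by $p$.

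\emph{Leading coefficient.} The leading coefficient is $(m+1)p$. Since $p\nmid m+1$, we get $p^2\nmid(m+1)p$.

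\emph{Middle coefficients.} For the even terms with $1\le j\le m$, the coefficient contains $\binom{2m+3}{2j}=\binom{p}{2j}$ with $0<2j<p$, so it is divisible by $p$. The powers of $2$ and the factor $m+1-j$ do not interfere, because divisibility of the whole product follows from divisibility of $\binom{p}{2j}$. For the odd terms with $1\le j\le m$, the coefficient is
\[
2^{2m-2j+2}\frac{j(2j+1)}{p-2j}\binom{p}{2j+1},
\]
where $0<2j+1<p$. Here $p\mid\binom{p}{2j+1}$, but the denominator $p-2j$ must be handled. I will argue that $p\nmid p-2j$, because $1\le 2j<p$. Moreover the coefficient is an integer, since $p_n$ and hence $f_n$ has integer coefficients (alternatively, $\frac{1}{p-2j}\binom{p}{2j+1}=\frac{1}{2j+1}\binom{p}{2j}$ is directly checkable). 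So the $p$-adic valuation of the coefficient is at least $v_p\binom{p}{2j+1}=1$.

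Combining these, the reversed polynomial $x^{2m+1}f_{2m+1}(1/x)$ is $p$-Eisenstein, and therefore $f_{p-2}$ is irreducible over $\mathbb Q$. If the theorem is read literally as saying that $f_{p-2}$ itself is Eisenstein in the reversed ordering, this is the same check. The statement for $p=3$ (that is, $m=0$) should also be confirmed directly from Table~3, where $f_1$ is linear with coefficients $2$ and $3$.

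The main obstacle I expect is the rational factor $\frac{j(2j+1)}{2m+3-2j}$ in the odd coefficients. One has to be sure that cancelling this denominator cannot remove the factor $p$ contributed by the binomial coefficient. This is settled because $p-2j$ is coprime to $p$ in the relevant range. Only the term $j=m+1$ gives denominator $1$, and that term is exactly the leading coefficient $(m+1)p$, which contains $p$ exactly once.
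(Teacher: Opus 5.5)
Your proof is correct and is essentially the paper's argument: apply \eqref{6.19} with $2m+3=p$, use $p\mid\binom{p}{\ell}$ for $1\le\ell\le p-1$ to handle the middle coefficients, and check that the constant term $(m+1)2^{2m+1}$ is prime to $p$ while the leading coefficient $(m+1)p$ contains $p$ exactly once. You are more careful than the paper on two points it leaves implicit. First, the criterion is being used in its reversed form. Second, the denominator $2m+3-2j$ in the odd coefficients is prime to $p$; your identity $\frac{1}{p-2j}\binom{p}{2j+1}=\frac{1}{2j+1}\binom{p}{2j}$ settles this neatly, since it turns those coefficients into $2^{2m-2j+2}j\binom{p}{2j}$.
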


\begin{proof}
We use \eqref{6.19} with $2m+1=p-2$, that is, $2m+3=p$. It is a well-known
and easy fact that $p\mid\binom{p}{\ell}$ for $1\leq\ell\leq p-1$. Hence it 
suffices to consider the constant and the leading coefficients of $f_{2m+1}(x)$.
The former is $(m+1)2^{2m+1}$, which is obviously not divisible by $p$. The 
latter occurs for $j=m+1$ in the second sum and is $(m+1)(2m+3)=(p-1)p/2$.
This is divisible by $p$ but not by $p^2$, which is the last requirement for
$f_{p-2}(x)$ to be an Eisenstein polynomial.
\end{proof}

\medskip
\begin{center}
{\renewcommand{\arraystretch}{1.1}
\begin{tabular}{|r|l|}
\hline
$n$ & $f_n(z)$ \\
\hline
1 & $3x+2$ \\
3 & $10x^3 + 20x^2 + 40x + 16$ \\
5 & $21x^5 + 70x^4 + 280x^3 + 336x^2 + 336x + 96$  \\
7 & $36x^7 + 168x^6 + 1008x^5 + 2016x^4 + 4032x^3 + 3456x^2 + 2304x + 512$  \\
9 & $55x^9 + 330x^8 + 2640x^7 + 7392x^6 + 22176x^5 + 31680x^4 + 42240x^3$ \\
& $\quad+ 28160x^2 + 14080x + 2560$  \\
\hline
\end{tabular}}

\medskip
{\bf Table~3}: $f_{2m+1}(z)$ for $0\leq m\leq 4$.
\end{center}


\begin{proof}[Proof of Theorem~\ref{thm:6.1}]
Obviously $n=p-2$ is odd, and we first consider $n=4\ell+1$. By
Lemma~\ref{lem:6.4} we have cont$(p_n)=1$ since $k=1$. By \eqref{2.6} we
have $p_{4\ell+1}(1)=(4\ell+2) 4^{4\ell+1}$, which is never a square since
$4\ell+2$ cannot be a square.

When $n=4\ell+3$, then with $k$ defined by $(4\ell+4)=2^km$, where $m$ is odd,
Lemma~\ref{lem:6.4} tells us that $\widetilde{p}_n(x):=2^{1-k}p_n(x)$ is
primitive. Next, again by \eqref{2.6}, we have
\[
\widetilde{p}_{4\ell+3}(1)=2^{1-k}(4\ell+4)2^{8\ell+6}=m\cdot 2^{8\ell+7},
\]
where we have used the definition of $k$. Since this last expression is never
a square, two of the conditions of Theorem~\ref{thm:6.3} are satisfied for
both $p_{4\ell+1}(x)$ and $\widetilde{p}_{4\ell+3}(x)$. The third and most
important condition is satisfied by Theorem~\ref{thm:6.7}, which completes the
proof of Theorem~\ref{thm:6.1}.
\end{proof}

\section{Some factorization results}

In contrast to the irreducibility (or conjectured irreducibility) of $p_n(z)$
and $q_n(z)$, we will now see that the polynomials consisting of only the
even-power terms of $p_n(z)$ factor in a specific way. The same is true for
the polynomials consisting of only the odd-power terms of $q_n(z)$. We begin
by defining the polynomials in question, namely
\begin{align*}
p_n^e(z)&=\frac{p_n(z)+p_n(-z)}{2},\qquad p_n^o(z)=\frac{p_n(z)-p_n(-z)}{2},\\
q_n^e(z)&=\frac{q_n(z)+q_n(-z)}{2},\qquad q_n^o(z)=\frac{q_n(z)-q_n(-z)}{2}.
\end{align*}
Two of these are closely related to each other, as we will now see.

\begin{lemma}\label{lem:6.1a}
For $n\geq 0$ we have
\begin{equation}\label{6.1a}
q_{n+1}^o(z)=2z\,p_n^e(z).
\end{equation}
\end{lemma}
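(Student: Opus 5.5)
We prove \eqref{6.1a} by taking odd parts in \eqref{4.3a} of Proposition~\ref{prop:4.1}, applied with $n$ replaced by $n+1$, which is legitimate since $n+1\geq 1$. That identity reads
\[
q_{n+1}(z)+2zp_n(z)=(n+2)(z+1)^{2n+2}.
\]
Replacing $z$ by $-z$ gives
\[
q_{n+1}(-z)-2zp_n(-z)=(n+2)(1-z)^{2n+2}.
\]
Subtracting the second identity from the first and dividing by $2$, we get
\[
q_{n+1}^o(z)+2z\,p_n^e(z)=\frac{n+2}{2}\left((1+z)^{2n+2}-(1-z)^{2n+2}\right).
\]

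The next step is to identify the right-hand side. By \eqref{3.7} of Lemma~\ref{lem:3.3},
\[
p_n(z)+p_n(-z)=(n+2)\frac{(1+z)^{2n+2}-(1-z)^{2n+2}}{4z}.
\]
Since $p_n(z)+p_n(-z)=2p_n^e(z)$, this says $(n+2)\left((1+z)^{2n+2}-(1-z)^{2n+2}\right)=8z\,p_n^e(z)$. Hence the right-hand side above equals $4z\,p_n^e(z)$.

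Substituting this back gives $q_{n+1}^o(z)+2zp_n^e(z)=4zp_n^e(z)$, that is, $q_{n+1}^o(z)=2zp_n^e(z)$, which is \eqref{6.1a}.

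No step is a real obstacle. The only points needing care are the sign bookkeeping when replacing $z$ by $-z$ (note $(-z+1)^{2n+2}=(1-z)^{2n+2}$) and the check that \eqref{4.3a} is applied in its range of validity, $n+1\geq 1$. As a sanity check, take $n=0$: here $q_1^o(z)=2z$ from Table~2, and $2z\,p_0^e(z)=2z$.
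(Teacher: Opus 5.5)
Your proposal is correct and follows essentially the same route as the paper: take the odd part of \eqref{4.3a} (with $n$ replaced by $n+1$), then use \eqref{3.7} to identify $\frac{n+2}{2}\left((1+z)^{2n+2}-(1-z)^{2n+2}\right)$ with $4z\,p_n^e(z)$. The paper phrases the last step as showing that $q_{n+1}^o(z)-2z\,p_n^e(z)$ vanishes, which is only a cosmetic difference.
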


\begin{proof}
Using the definition and \eqref{4.3a}, we have
\[
q_{n+1}^o(z)=\frac{n+2}{2}\left((1+z)^{2n+2}-(1-z)^{2n+2}\right)
-z\left(p_n(z)+p_n(-z)\right).
\]
Subtracting $2zp_n^e(z)$ from both sides, we get
\[
q_{n+1}^o(z)-2zp_n^e(z)=\frac{n+2}{2}\left((1+z)^{2n+2}-(1-z)^{2n+2}\right)
-2z\left(p_n(z)+p_n(-z)\right).
\]
By \eqref{3.7}, the right-hand side vanishes, which proves \eqref{6.1a}.
\end{proof}

By Lemma~\ref{lem:6.1a}, we can restrict our attention to $p_n^e(z)$. We have
the following factorization.

\begin{proposition}\label{prop:6.2a}
For all $n\geq 0$ we have
\begin{equation}\label{6.2a}
p_n^e(z)=\frac{n+2}{2}
\left((1-z^2)^{\frac{n+1}{2}}T_{n+1}\big(\tfrac{1}{\sqrt{1-z^2}}\big)\right)
\left((1-z^2)^{\frac{n}{2}}U_{n}\big(\tfrac{1}{\sqrt{1-z^2}}\big)\right).
\end{equation}
\end{proposition}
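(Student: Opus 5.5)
Starting from \eqref{3.7}, we have
\[
p_n^e(z)=\frac{n+2}{2}\cdot\frac{(1+z)^{2n+2}-(1-z)^{2n+2}}{4z},
\]
so the claimed factorization is equivalent to an identity about this difference quotient. The plan is to factor it as a difference of squares,
\[
(1+z)^{2n+2}-(1-z)^{2n+2}=\left((1+z)^{n+1}+(1-z)^{n+1}\right)\left((1+z)^{n+1}-(1-z)^{n+1}\right),
\]
and then to identify each factor with one of the two bracketed Chebyshev expressions in \eqref{6.2a}.

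For the identification, set $x=1/\sqrt{1-z^2}$. Then $x^2-1=z^2/(1-z^2)$, so $\sqrt{x^2-1}=z/\sqrt{1-z^2}$ and $x\pm\sqrt{x^2-1}=(1\pm z)/\sqrt{1-z^2}$. Substituting this into \eqref{3.11} gives
\[
(1-z^2)^{\frac{n+1}{2}}T_{n+1}(x)=\tfrac12\left((1+z)^{n+1}+(1-z)^{n+1}\right).
\]
Substituting into \eqref{1.6} gives
\[
(1-z^2)^{\frac{n}{2}}U_n(x)=(1-z^2)^{\frac n2}\cdot\frac{(1+z)^{n+1}-(1-z)^{n+1}}{(1-z^2)^{\frac{n+1}{2}}\cdot 2z/\sqrt{1-z^2}}=\frac{(1+z)^{n+1}-(1-z)^{n+1}}{2z}.
\]

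Multiplying these two expressions yields $\bigl((1+z)^{2n+2}-(1-z)^{2n+2}\bigr)/(4z)$. Together with the factor $(n+2)/2$ coming from \eqref{3.7}, this is exactly \eqref{6.2a}.

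The main point needing care is branch and sign consistency. I would treat $\sqrt{1-z^2}$ formally, or take $z$ real with $|z|<1$ and then extend by polynomial identity. The justification is that $T_{n+1}$ has the parity of $n+1$ and $U_n$ has the parity of $n$. Hence each bracketed expression in \eqref{6.2a} is a genuine polynomial in $z$, involving only integer powers of $1-z^2$. Also, \eqref{1.6} and \eqref{3.11} are unchanged when the sign of $\sqrt{x^2-1}$ is flipped, so the choice of square root does not matter. A check at $n=0$ confirms the normalization: both sides equal $2$, since $p_0^e=1$ and $\tfrac{2}{2}\cdot 1\cdot 1=1$, with $T_1(x)\sqrt{1-z^2}=1$ and $U_0=1$.
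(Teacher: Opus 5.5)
Your proof is correct and is essentially the paper's argument. The paper substitutes $x=1/\sqrt{1-z^2}$ into \eqref{3.7} to get $U_{2n+1}$ (this is \eqref{3.13}) and then cites $U_{2n+1}(x)=2T_{n+1}(x)U_n(x)$; your difference-of-squares factorization is exactly that identity worked out by hand before substituting, which makes your version self-contained and more careful about the branch choice, and the only slip is cosmetic: in your $n=0$ check both sides equal $1$, not $2$.
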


\begin{proof}
We use \eqref{3.13} with $z=\sqrt{x^2-1}/x$. Solving for $x$, we get
$x=\pm 1/\sqrt{1-z^2}$. Since the left-hand side of \eqref{3.13} is an even
function in $x$, we may choose the ``$+$" sign. Upon dividing \eqref{3.13} by
2, we then get
\[
p_n^e(z)=\frac{n+2}{4}
(1-z^2)^{\frac{2n+1}{2}}U_{2n+1}\big(\tfrac{1}{\sqrt{1-z^2}}\big).
\]
Finally, by using the identity $U_{2n+1}(x)=2T_{n+1}(x)U_{n}(x)$ (see, e.g., 
\cite[p.~229]{Ri}) and appropriately distributing the powers of $\sqrt{1-z^2}$,
we get \eqref{6.2a}.
\end{proof}

The following is a consequence of Proposition~\ref{prop:6.2a}.

\begin{corollary}\label{cor:6.3a}
For $n\geq 1$, we have $p_n^e(z)=r_n(z)s_n(z)$, where both factors are even
polynomials with integer coefficients, and
\[
\begin{cases}
\deg{r_n}=\deg{s_n}=n & \hbox{when $n$ is even},\\
\deg{r_n}=n+1, \deg{s_n}=n-1 & \hbox{when $n$ is odd}.
\end{cases}
\]
\end{corollary}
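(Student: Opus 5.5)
The plan is to read off the factorization directly from \eqref{6.2a} and check that each bracketed factor there is an even polynomial in $z$ with integer coefficients of the stated degree. The scalar $\frac{n+2}{2}$ is then absorbed into one of the two factors.

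Set
\[
R_n(z)=(1-z^2)^{\frac{n+1}{2}}T_{n+1}\big(\tfrac{1}{\sqrt{1-z^2}}\big),\qquad
S_n(z)=(1-z^2)^{\frac{n}{2}}U_{n}\big(\tfrac{1}{\sqrt{1-z^2}}\big).
\]
The key input is parity. $T_{n+1}(x)$ contains only the powers $x^{n+1-2j}$, $0\le j\le\lfloor\frac{n+1}{2}\rfloor$, with integer coefficients. Write $T_{n+1}(x)=\sum_j c_jx^{n+1-2j}$. Then
\[
R_n(z)=\sum_j c_j(1-z^2)^{j}.
\]
This is a polynomial in $z^2$ with integer coefficients, so it is even. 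The same argument applies to $S_n(z)=\sum_j d_j(1-z^2)^j$, using $U_n(x)=\sum_j(-1)^j\binom{n-j}{j}(2x)^{n-2j}$, $0\le j\le\lfloor\frac n2\rfloor$.

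Degrees: the highest power of $1-z^2$ comes from the lowest-degree term of the Chebyshev polynomial, so I must check that this term is nonzero.

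\begin{itemize}
\item For $T_{n+1}$: when $n+1$ is even the constant term is $\pm1$; when $n+1$ is odd the coefficient of $x$ is $\pm(n+1)$. Hence $\deg R_n=2\lfloor\frac{n+1}{2}\rfloor$, which equals $n$ for $n$ even and $n+1$ for $n$ odd.
\item For $U_n$: the lowest term is $\pm1$ for $n$ even and $\pm\frac{n+1}{2}\cdot 2x$ for $n$ odd. Hence $\deg S_n=2\lfloor\frac n2\rfloor$, which equals $n$ for $n$ even and $n-1$ for $n$ odd.
\end{itemize}

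These are exactly the degrees in the statement. As a consistency check, they add to $2n=\deg p_n^e$, since the $z^{2n}$ coefficient $\frac{(n+1)(n+2)}{2}$ of $p_n$ is nonzero.

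The remaining obstacle is the factor $\frac{n+2}{2}$, which must be distributed so that integrality is kept.

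\begin{itemize}
\item If $n$ is even, $\frac{n+2}{2}$ is an integer. Take $r_n=\frac{n+2}{2}R_n$ and $s_n=S_n$.
\item If $n$ is odd, $n+2$ is odd, so a factor $2$ must be found. In the expansion of $U_n$, every term $(2x)^{n-2j}$ has $n-2j\ge1$, so every $d_j$ is even. Hence $\frac12S_n$ has integer coefficients. Take $r_n=(n+2)R_n$ and $s_n=\frac12 S_n$.
\end{itemize}

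In both cases $p_n^e=r_ns_n$ by \eqref{6.2a}, both factors are even polynomials with integer coefficients, and the degrees are as claimed, since scaling by a nonzero constant does not change the degree.
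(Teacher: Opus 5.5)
Your proof is correct and follows the paper's route: read the factorization off \eqref{6.2a}, then use the parity of $T_{n+1}$ and $U_n$ to see that the reversed Chebyshev expressions are even polynomials in $z$ of the stated degrees. The one difference is the integrality step, where the paper just cites Gauss's Lemma; you instead place the constant $\frac{n+2}{2}$ explicitly (using that $U_n$ has only even coefficients for odd $n$) and also check the nonvanishing lowest-order Chebyshev coefficients that the paper's degree claim takes for granted, which gives $r_n$ and $s_n$ concretely.
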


\begin{proof}
The Chebyshev polynomials $T_n(x)$ and $U_n(x)$ have degree $n$ and they are
even (resp.\ odd) polynomials when $n$ is even (resp.\ odd). This implies that
$x^nT_n(\tfrac{1}{x})$ and $x^nU_n(\tfrac{1}{x})$ are even polynomials for all 
$n$, and they are
of degree $n$ (resp.\ $n-1$) when $n$ is even (resp.\ odd). The same is then
true when $x$ is replaced by $\sqrt{1-z^2}$. The statement of the corollary
now follows from \eqref{6.2a}, and Gauss's Lemma shows that both factors have
integer coefficients.
\end{proof}

The polynomials $r_n(z)$ and $s_n(z)$ may or may not be irreducible over 
$\mathbb Q$. In particular, in the cases where $T_{n+1}(x)$ or $U_n(x)$ have 
even or odd factors, $r_n(z)$ or $s_n(z)$ will also split into further factors.
We did not pursue this further, and only note that factorization properties of 
Chebyshev polynomials can be found in \cite[p.~220ff.]{Ri}.

\section*{Acknowledgment}

We thank Lin Jiu of Duke Kunshan University and Dalhousie University for his 
help and advise with the WZ method and its implementations.

\end{document}